\newtheorem{theorem}{Theorem}
\newtheorem{obs}{Observation}
\newtheorem{proposition}{Proposition}
\newtheorem{conjecture}{Conjecture}
\newtheorem{corollary}{Corollary}
\newtheorem{lemma}{Lemma}
\theoremstyle{remark}
\theoremstyle{definition}
\title{Family Ties: Relating Poncelet\\
3-Periodics by their Properties}
\author{Ronaldo Garcia}
\author{Dan Reznik}
\begin{document}

\maketitle

\begin{abstract}
We compare loci types and invariants across Poncelet families interscribed in three distinct concentric Ellipse pairs: (i) ellipse-incircle, (ii) circumcircle-inellipse, and (iii) homothetic. Their metric properties are mostly identical to those of 3 well-studied families: elliptic billiard (confocal pair), Chapple's poristic triangles, and the Brocard porism. We therefore organized them in three related groups.
\vskip .3cm
\noindent\textbf{Keywords} invariant, elliptic, billiard, locus.
\vskip .3cm
\noindent \textbf{MSC} {51M04
\and 51N20 \and 51N35\and 68T20}
\end{abstract}

\section{Introduction}
\label{sec:intro}
We have been studying loci and invariants of Poncelet 3-periodics in the confocal ellipse pair (elliptic billiard). Classic invariants include Joachmisthal's constant $J$ (all trajectory segments are tangent to a confocal caustic) and perimeter $L$ \cite{sergei91}.

A few properties detected experimentally \cite{reznik2020-intelligencer} and later proved can be divided into two groups: (i) loci of triangle centers (we use the $X_k$ notation in \cite{etc}), and (ii) invariants. 

In terms of loci, the following results have been proved: (i) the locus of the incenter \cite{garcia2019-incenter,olga14}, barycenter \cite{sergei2016-com}, circumcenter \cite{fierobe2021-circumcenter,garcia2019-incenter}, orthocenter \cite{garcia2020-ellipses} and many others are ellipses; (ii) a special triangle center known as the Mittenpunkt $X_9$ is stationary \cite{olga19_mitten}.

For invariants we chiefly have (i) the sum of cosines \cite{akopyan2020-invariants,bialy2020-invariants}, (ii) the product of outer polygon cosines, and (iii) outer-to-3-periodic area ratio \cite{caliz2020-area-product}.

We continue our inquiry into loci and invariants by now considering 3-periodic families three other non-confocal though concentric ellipse pairs. Referring to  Figure~\ref{fig:four-sys}:

\begin{itemize}
    \item Family I: outer ellipse and incircle, incenter $X_1$ is stationary.
    \item Family II: outer circumcircle and inellipse, circumcenter $X_3$ is stationary
    \item Family III: an axis aligned pair of homothetic ellipses, the barycenter $X_2$ is stationary.
\end{itemize}

\begin{figure}
    \centering
    \includegraphics[width=.8\textwidth]{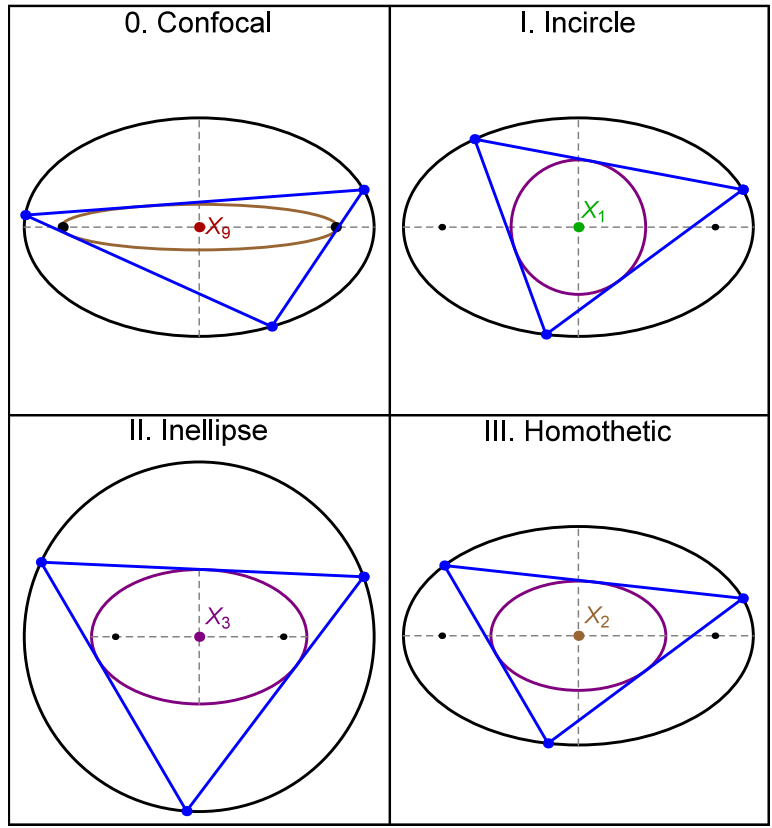}
    \caption{Poncelet 3-periodic families in the various concentric ellipse pairs studied in the article. Properties and loci of the confocal pair (elliptic billiard) were studied in \cite{reznik2020-intelligencer,garcia2020-new-properties,garcia2020-ellipses}. For each family the particular triangle center which is stationary is indicated.}
    \label{fig:four-sys}
\end{figure}

One goal is to identify properties of the above common with previously-studied 3-periodic families, namely, (i) the confocal pair (elliptic billiard), (ii) Chapple's porism \cite{gallatly1914-geometry} and (iii) the so-called Brocard porism \cite{bradley2007-brocard,johnson29}. A quick review of their geometry appears in Section~\ref{sec:review}.

\subsection*{Main Results}

Here are our main results:

\begin{itemize}
    \item Family I
    \begin{itemize}
        \item It conserves the circumradius, the sum of cosines, and the sum of sidelengths divided by their product.
        \item Its sum of cosines is identical to that of the confocal pair which is its affine image.
        \item The family is the image of Chapple's poristic family \cite{odehnal2011-poristic} under a variable rigid rotation.
        \item The poristic family is the image of the confocal family under a variable similarity transform \cite{garcia2020-poristic}. Therefore family I retains several all scale-free invariants identified for the elliptic billiard, including the sum of cosines.
        \end{itemize}
    \item Family II
    \begin{itemize}
           \item It conserves the cosine product and the sum of squared sidelengths.
           \item Its product of cosines is identical to that of the excentral triangles in the confocal pair which is its affine image.
           \item In the elliptic billiard, the locus of the incenter (resp. symmedian point) is an ellipse (resp. quartic) \cite{garcia2020-ellipses}. Here the roles swap: the incenter describes a quartic, and the symmedian is an ellipse.
           \item The orthic triangles of this family are the image of the poristic family under a variable rigid rotation.
    \end{itemize}
    \item Family III
    \begin{itemize}
        \item It conserves area, sum of sidelengths squared, sum of cotangents (the latter implies that the Brocard angle is invariant).
        \item Again in contradistinction with the elliptic billiard, the locus of the incenter $X_1$ is non-elliptic while that of $X_6$ is an ellipse.
        \item The locus of irrational triangle centers $X_k$, $k=$13,14,15,16, i.e., the isodynamic and isogonic points, are circles! In the billiard, they are non-conic.
        \item As shown in \cite{reznik2020-similarityII}, this family is the image of Brocard porism triangles \cite{bradley2007-brocard} under a variable similarity transform.
    \end{itemize}
\end{itemize}

Thus, the following group Poncelet families is proposed with mostly identical properties: (i) family I: confocal, poristics; (ii) family II: confocal excentrals, poristic excentrals; (iii) family III: Brocard porism. Table~\ref{tab:xn-comparison} shows how loci types are shared and/or differ across families, and Figure~\ref{fig:transf-overview} gives a bird's eye view of the kinship across these families via various transformations.

\subsection*{Related Work}

Romaskevich proved the locus of the incenter $X_1$ over the confocal family is an ellipse \cite{olga14}. Schwartz and Tabachnikov showed that the locus of barycenter and area centers of Poncelet trajectories are ellipses though the locus of the perimeter centroid in general isn't a conic \cite{sergei2016-com}. For $N=3$, the former correspond to $X_2$ and the latter to the Spieker center $X_{10}$. Garcia \cite{garcia2019-incenter} and Fierobe \cite{fierobe2021-circumcenter} showed that the locus of the circumcenter of 3-periodics in the elliptic billiard are ellipses. Indeed, the loci of 29 out of the first 100 triangle centers listed in \cite{etc} are ellipses \cite{garcia2020-ellipses}. Tabachnikov and Tsukerman \cite{tabachnikov2015-circum} and Chavez-Caliz \cite{caliz2020-area-product} studied properties and loci of the ``circumcenters of mass'' of Poncelet N-periodics. This is a generalizations of the classical concept of circumcenter to generic polygons, based on triangulations, etc.

The following invariants for N-periodics in the elliptic billiard have been proved: (i) sum of cosines  \cite{akopyan2020-invariants,bialy2020-invariants}, (ii) product of cosines of the outer polygons \cite{akopyan2020-invariants,bialy2020-invariants}, and (iii) area ratios and products of N-periodics and their polar polygons (excentral triangle for N=3); interestingly, these depend on the parity of N \cite{bialy2020-invariants,caliz2020-area-product}. Result (i) also holds for the Poncelet family interscribed between an ellipse and a concentric circle \cite[Corollary 6.4]{akopyan2020-invariants}.

\subsection*{Article structure} We start by reviewing the confocal, Chapple's, and Brocard porisms in Section~\ref{sec:review}. We then describe properties, invariants, and transformations of families I, II, and III in Sections~\ref{sec:I}, \ref{sec:II}, and \ref{sec:III}, respectively. We summarize all results in Section~\ref{sec:summary}. Highlights include (i) a graph representing affine and/or similarity relations between the various families (Figure~\ref{fig:transf-overview}), (ii) a table of conserved quantities which we have found to continue to hold for $N>3$ (proof pending), and (iii) a table with links to videos illustrating some phenomena herein.

\section{Review of Classic Porisms and Proof Method}
\label{sec:review}
Grave's Theorem affirms that given a confocal pair $(\mathcal{E},\mathcal{E}'')$, the two tangents to $\mathcal{E}''$ from a point $P$ on $\mathcal{E}$ will be bisected by the normal of $\mathcal{E}$ at $P$ \cite{miller1925}. A consequence is that any closed Poncelet polygon interscribed in such a pair, if regarded as the path of a moving particle bouncing elastically against the boundary, will be {\em N-periodic}. For this reason, this pair is termed the {\em elliptic billiard}; \cite{sergei91} is the seminal work. It is conjectured as the only integrable planar billiard \cite{kaloshin2018}. One consequence, mentioned above, is that it conserves perimeter $L$. An explicit parametrization for 3-periodic vertices appears in Appendix~\ref{app:explicit-confocal}.

Referring to Figure~\ref{fig:poristic}, poristic triangles are a one-parameter Poncelet family with fixed incircle and circumcircle discovered in 1746 by William Chapple. Recently, Odehnal \cite{odehnal2011-poristic} has studied loci of its triangle centers. showing many of them are either stationary, ellipses, or circles. Surprisingly, the poristic family is the image of billiard 3-periodics under a variable similarity transform \cite{garcia2020-poristic}, and these two families share many properties and invariants.

\begin{figure}
    \centering
    \includegraphics[width=.7\textwidth]{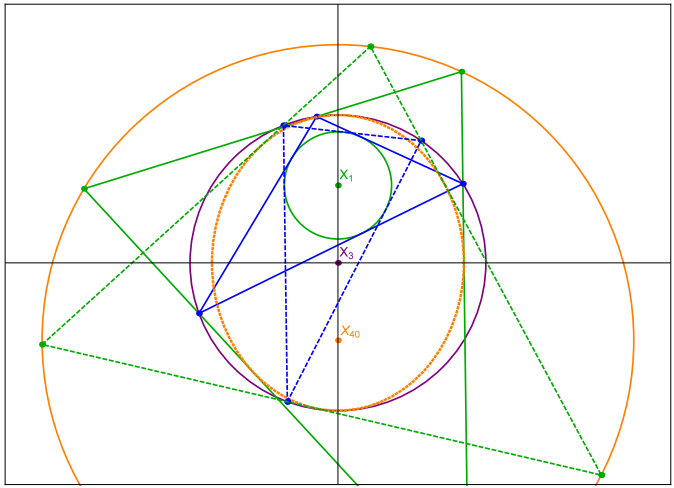}
    \caption{The poristic triangle family (blue) \cite{gallatly1914-geometry} has a fixed incircle (green) and circumcircle (purple). Let $r,R$ denote their radii. Its excentral triangles (green) are inscribed in a circle of radius $2R$ centered on the Bevan point $X_{40}$ and circumscribe the MacBeath inconic (dashed orange) \cite{mw}, centered on $X_3$ with foci at $X_1$ and $X_{40}$. A second configuration is also shown (dashed blue and dashed green).  \href{https://youtu.be/DS4ryndDK6Q}{Video}}
    \label{fig:poristic}
\end{figure}

Referring to Figure~\ref{fig:brocard-porism}, the Brocard porism \cite{bradley2007-brocard} is a family of triangles inscribed in a circle and circumscribed to a special inellipse known as the ``Brocard inellipse'' \cite[Brocard Inellipse]{mw}. Notably, the family's Brocard points are stationary and coincide with the foci of the inellipse. Also remarkable is the fact that the Brocard angle $\omega$ is invariant \cite{johnson29}. In \cite{reznik2020-similarityII} we showed this family is the image of family III triangles under a variable similarity transform.

\begin{figure}
    \centering
    \includegraphics[width=.7\textwidth,trim={00 450 0 0},clip]{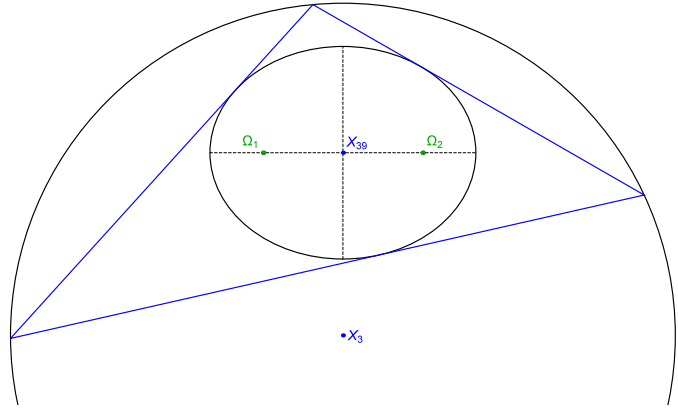}
    \caption{The Brocard porism \cite{bradley2007-brocard} is a 1d Poncelet family of triangles (blue) inscribed in a circle (black, upper half shown) and circumscribed about the Brocard inellipse \cite{mw} centered on $X_{39}$ and with foci at the stationary Brocard points $\Omega_1$ and $\Omega_2$ of the family. The Brocard angle is invariant \cite{johnson29}. \href{https://youtu.be/JANPPLET0so}{Video}}
    \label{fig:brocard-porism}
\end{figure}

\subsection*{A word about our proof method}

We omit some proofs below as they are obtained from a consistent method used previously in \cite{garcia2020-ellipses}: (i) depart from symbolic expressions for the vertices of an isosceles 3-periodic (see Appendix~\ref{app:explicit}); (ii) obtain a symbolic expression for the invariant of interest; (iii) simplify it assisted by a CAS, arriving at a ``candidate'' symbolic expression for the invariant; (iv) verify the latter holds for any (non-isosceles) N-periodic and/or Poncelet pair aspect ratios and if it does, declare it as provably invariant.

\section{Family I: Outer Ellipse, Inner Circle}
\label{sec:I}
Here we study a Poncelet family inscribed in an ellipse centered on $O$ with semi-axes $(a,b)$ and circumscribes a concentric circle of radius $r$, Figure~\ref{fig:ellipse-circle-poristic} (left). An explicit parametrization is provided in Appendix~\ref{app:explicit-I}.

Cayley's closure condition \cite{dragovic11} assumes a simple form for 3-periodics in a concentric, axis-aligned pair of ellipses \cite{georgiev2012-poncelet}:

\begin{proposition}
For 3-periodics in an axis-aligned, concentric ellipse pair:

\begin{equation}
    \frac{a'}{a}+\frac{b'}{b}=1,
    \label{eqn:pair-n3}
\end{equation}
where $a>b>0$, $a'>0$, and $b'>0$.
\end{proposition}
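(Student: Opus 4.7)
The plan is to invoke Poncelet's closure theorem to reduce the condition to a single distinguished 3-periodic, exploit the reflection symmetry of concentric axis-aligned ellipses to pin down its shape, and extract (\ref{eqn:pair-n3}) from two tangency equations.

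Both ellipses are invariant under $(x,y)\mapsto(x,-y)$, so the two tangent lines from the outer vertex $(a,0)$ to the inner ellipse are reflections of each other about the $x$-axis. Hence the 3-periodic based at $(a,0)$ is isosceles, with remaining vertices $V_\pm=(a\cos\theta,\pm b\sin\theta)$ for some $\theta\in(\pi/2,\pi)$. By Poncelet's closure theorem, the pair admits a 1-parameter family of 3-periodics if and only if this one isosceles configuration closes, so it suffices to write down the conditions that its three sides be simultaneously tangent to the inner ellipse.

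The opposite side $V_+V_-$ is the vertical line $x=a\cos\theta$, which is tangent to $x^2/(a')^2+y^2/(b')^2=1$ precisely when $a\cos\theta=-a'$ (the tangent on the far side from the vertex). Next, the slanted side from $(a,0)$ to $V_+$ can be put in the form $x/a+y\tan(\theta/2)/b=1$ by invoking the half-angle identity $(1-\cos\theta)/\sin\theta=\tan(\theta/2)$. The classical tangency criterion says a line $\alpha x+\beta y=1$ touches $x^2/(a')^2+y^2/(b')^2=1$ exactly when $(a')^2\alpha^2+(b')^2\beta^2=1$, which here reads $(a'/a)^2+(b'/b)^2\tan^2(\theta/2)=1$.

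Eliminating $\theta$ via $\tan^2(\theta/2)=(1-\cos\theta)/(1+\cos\theta)=(a+a')/(a-a')$, the slanted-side equation simplifies to $(b'/b)^2=((a-a')/a)^2$. Taking positive roots and rearranging yields $a'/a+b'/b=1$. The main obstacle is really just bookkeeping: choosing the correct sign $a\cos\theta=-a'$ rather than $+a'$, and selecting the positive branch $b'/b=(a-a')/a$ (so that $b'>0$ with the inner ellipse strictly inside the outer). The converse comes for free: Poncelet closure propagates from one trajectory to all, so (\ref{eqn:pair-n3}) is equivalent to the existence of the full 3-periodic family.
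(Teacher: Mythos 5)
Your proof is correct, and it is genuinely different from what the paper does: the paper offers no derivation at all, simply quoting this identity as the specialization of Cayley's closure condition to a concentric axis-aligned pair (citing Dragovi\'c--Radnovi\'c and Georgiev--Nedyalkova). Your argument replaces that black box with an elementary, self-contained computation: Poncelet's closure theorem reduces the existence of the whole family to the closure of the single symmetric orbit launched from $(a,0)$; reflection symmetry forces that orbit to be isosceles with vertices $(a\cos\theta,\pm b\sin\theta)$; tangency of the vertical side gives $\cos\theta=-a'/a$, and tangency of the slanted side $x/a+y\tan(\theta/2)/b=1$ via the criterion $(a')^2\alpha^2+(b')^2\beta^2=1$ gives $(b'/b)^2=((a-a')/a)^2$, whence $a'/a+b'/b=1$ after taking the positive root (legitimate since $0<a'<a$ for a nested pair). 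I checked the algebra: with $\tan^2(\theta/2)=(a+a')/(a-a')$ the tangency equation indeed collapses to $(b'/b)^2=(1-(a'/a)^2)\cdot(a-a')/(a+a')=((a-a')/a)^2$. The sign choices you flag (taking $a\cos\theta=-a'$ so the opposite side lies beyond the center, and the positive branch for $b'$) are exactly the right places where care is needed, and your appeal to Poncelet closure correctly upgrades the single closed orbit to the full one-parameter family, so the condition is both necessary and sufficient. What your route buys is transparency and independence from the general Cayley machinery; what the paper's citation buys is immediate generalizability to $N>3$, where the symmetric-orbit trick still works in principle but the tangency bookkeeping no longer reduces to two equations.
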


\begin{corollary}
For family I 3-periodics, the radius $r$ of the fixed incircle is given by:
\[ r=\frac{{a}{b}}{a+b}\cdot\]
\end{corollary}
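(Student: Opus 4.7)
The plan is to apply the preceding proposition directly, specializing the inner conic to a circle. Since Family I is an axis-aligned concentric ellipse pair whose inner conic is a circle of radius $r$, the inner semi-axes satisfy $a'=b'=r$.

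Substituting $a'=b'=r$ into the closure relation $\tfrac{a'}{a}+\tfrac{b'}{b}=1$ gives
\[
\frac{r}{a}+\frac{r}{b}=r\cdot\frac{a+b}{ab}=1,
\]
and solving for $r$ yields $r=\tfrac{ab}{a+b}$, as claimed.

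There is essentially no obstacle here: the proposition does all the work, and the corollary is a one-line algebraic specialization. The only subtlety worth a brief remark would be to confirm that the hypotheses $a>b>0$, $a'>0$, $b'>0$ of the proposition are indeed met by Family I (which they are, since $r>0$ and $r<b$ is guaranteed by the fact that the incircle lies strictly inside the ellipse), so that the closure condition applies and the value of $r$ is admissible.
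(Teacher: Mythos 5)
Your proof is correct and is exactly the derivation the paper intends: the statement is presented as a corollary of the closure condition $\frac{a'}{a}+\frac{b'}{b}=1$, and specializing the inner conic to a circle by setting $a'=b'=r$ and solving gives $r=\frac{ab}{a+b}$. The extra remark checking the hypotheses is a nice touch but not needed beyond what you wrote.
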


\begin{proposition}
In the family I 3-periodics   the locus of the barycenter $X_2$ is an ellipse with axes $a_2=a(a - b)/(3a + 3b) $ and $b_2=b(a - b)/(3a + 3b)$
 centered on $O=X_1$.

\label{prop:X2_familyI}
\end{proposition}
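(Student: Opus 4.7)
My approach is a direct parametric verification in the spirit of the CAS-assisted method summarized at the end of Section~\ref{sec:review}. Writing the three vertices of a Family~I 3-periodic as $P_i=(a\cos\theta_i,b\sin\theta_i)$ for $i=1,2,3$, the Poncelet closure --- each side tangent to the incircle of radius $r=ab/(a+b)$ from the preceding corollary --- determines $\theta_2$ and $\theta_3$ as explicit functions of the parameter $t:=\theta_1$, via the formulas tabulated in Appendix~\ref{app:explicit-I}. The barycenter is then
\begin{equation*}
X_2(t)=\left(\frac{a}{3}\,C(t),\ \frac{b}{3}\,S(t)\right),\qquad C(t):=\sum_{i=1}^{3}\cos\theta_i(t),\quad S(t):=\sum_{i=1}^{3}\sin\theta_i(t).
\end{equation*}

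With $a_2,b_2$ as stated, substituting $x=aC(t)/3$ and $y=bS(t)/3$ into $(x/a_2)^2+(y/b_2)^2$ yields $((a+b)/(a-b))^2\bigl(C(t)^2+S(t)^2\bigr)$. The proposition therefore reduces to the single scalar identity
\begin{equation*}
C(t)^2+S(t)^2\;=\;\left(\frac{a-b}{a+b}\right)^2,
\end{equation*}
which I would verify in the customary two-step fashion: (i) evaluate both sides on an isosceles 3-periodic (e.g.~one with symmetry axis along the major axis of the outer ellipse) to fix the right-hand side as the candidate constant; (ii) simplify $C(t)^2+S(t)^2$ symbolically in $t$ and confirm that all $t$-dependence cancels. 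To upgrade ``locus $\subseteq$ ellipse'' to ``locus $=$ ellipse'', one uses continuity of $t\mapsto X_2(t)$ together with the $D_2$-symmetry of Family~I inherited from the outer ellipse: the closed, connected, $D_2$-invariant image is forced to be the full ellipse (since it is not a single point).

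The main obstacle lies in step (ii): the closed-form expressions for $\cos\theta_{2,3}(t)$ and $\sin\theta_{2,3}(t)$ involve radicals arising from Cayley's closure together with the incircle-tangency equation, so the collapse to a $t$-free right-hand side is not visually apparent and essentially mandates CAS-assisted rationalization (for instance via the Weierstrass substitution $u=\tan(t/2)$) or a Gr\"obner basis computation. An attractive parallel route exploits the variable rigid rotation about $X_1=O$ relating Family~I to Chapple's porism (stated in the Main Results): because this rotation preserves distances from $O$, one may hope to derive the scalar identity in Chapple's fixed-incircle setting --- where vertices lie on a fixed circle and the Poncelet formulas are cleaner --- and then transfer it back to Family~I.
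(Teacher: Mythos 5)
Your proposal is correct and follows essentially the same route the paper takes: the paper omits an explicit proof of Proposition~\ref{prop:X2_familyI}, deferring to the generic CAS-assisted method of Section~\ref{sec:review} (explicit vertices from Appendix~\ref{app:explicit-I}, isosceles candidate, symbolic verification), which is exactly what you carry out. Your reduction of the claim to the single scalar identity $C(t)^2+S(t)^2=\left((a-b)/(a+b)\right)^2$, anchored at the isosceles position $P_1=(a,0)$ where $C=(a-b)/(a+b)$ and $S=0$, together with the antipodal-symmetry argument for surjectivity, is a clean and correct packaging of that same computation.
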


\begin{theorem}
Family I 3-periodics have invariant circumradius $R=(a+b)/2$. Furthermore, the locus of the circumcenter $X_3$ is a circle of radius $d=R-b=a-R$ centered on $O=X_1$.
\label{prop:R}
\end{theorem}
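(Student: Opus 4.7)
The plan is to split the theorem into two parts: (a) the invariance claim $R=(a+b)/2$, and (b) the locus claim for $X_3$, and to let Euler's classical identity $|OI|^{2}=R^{2}-2Rr$ (with $O$ the circumcenter and $I$ the incenter) carry (a) into (b) essentially for free.

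For the invariance of $R$, I would follow the computational method described in Section~\ref{sec:review}. Starting from the explicit parametrization of Family I vertices in Appendix~\ref{app:explicit-I}, I compute the circumradius symbolically via $R=|P_{1}P_{2}|\cdot|P_{2}P_{3}|\cdot|P_{3}P_{1}|/(4\,\mathrm{Area})$ and simplify with a CAS, as in \cite{garcia2020-ellipses}. To guess and sanity-check the invariant value I would use the isosceles 3-periodic with $P_{1}=(a,0)$: intersecting the two tangent lines from $P_{1}$ to the incircle $x^{2}+y^{2}=r^{2}$ (with $r=ab/(a+b)$ by the preceding Corollary) with the outer ellipse yields
\[
P_{2,3}=\left(-\frac{ab}{a+b},\;\pm\frac{b\sqrt{a(a+2b)}}{a+b}\right),
\]
from which a direct calculation gives $R=(a+b)/2$. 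One then verifies symbolically that the same value is obtained for a generic (non-isosceles) 3-periodic in the family.

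Once $R$ is known to be invariant, the locus statement is nearly automatic. The incircle is fixed at $O$ by hypothesis, so $X_{1}=O$ is stationary and $r=ab/(a+b)$ is invariant. Substituting into Euler's identity,
\[
|X_{1}X_{3}|^{2}=R^{2}-2Rr=\frac{(a+b)^{2}}{4}-ab=\frac{(a-b)^{2}}{4},
\]
so $|X_{1}X_{3}|=(a-b)/2=R-b=a-R$ is constant, forcing $X_{3}$ onto a circle of this radius centered on $O=X_{1}$. A brief continuity/sweep argument confirms the locus fills the whole circle.

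The main obstacle is the first step: the symbolic verification that $R$ is truly invariant outside the isosceles configuration requires careful CAS-assisted simplification using the Cayley relation \eqref{eqn:pair-n3}. A conceptually shorter alternative, if the ``rigid-rotation'' relation of Family I to Chapple's poristic family (listed in the main results) is established independently, is to observe that rigid motions preserve the circumradius and Chapple's family has a fixed circumcircle; invariance of $R$ then follows at once, and its value $(a+b)/2$ is still pinned down by the single isosceles computation above.
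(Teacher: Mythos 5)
Your proposal is correct. For the invariance of $R$ you follow essentially the paper's route: the paper likewise starts from the explicit vertex parametrization of Appendix~\ref{app:explicit-I} and, by CAS-assisted simplification, extracts the circumcircle's radius $(a+b)/2$; your isosceles sanity check at $P_1=(a,0)$ (which indeed gives $P_{2,3}=(-\tfrac{ab}{a+b},\pm\tfrac{b\sqrt{a(a+2b)}}{a+b})$ and $R=(a+b)/2$) is consistent with the stated proof method of Section~\ref{sec:review}. Where you genuinely diverge is the locus claim: the paper computes the coordinates of $X_3$ explicitly as a function of $P_1$ and reads off that they satisfy $x^2+y^2=\left(\tfrac{a-b}{2}\right)^2$, whereas you invoke Euler's relation $|X_1X_3|^2=R(R-2r)$ together with the stationarity of $X_1=O$ and the invariance of $r=ab/(a+b)$ to conclude $|X_1X_3|=(a-b)/2$ without any further symbolic work. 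Your route is cleaner and more conceptual — it makes transparent that the circular locus of $X_3$ is an automatic consequence of the two invariances rather than an independent computational fact — while the paper's explicit formula for $X_3$ has the side benefit of exhibiting the parametrization of the locus (useful, e.g., for the later identification of Family~I with the rotating poristic family). Two small cautions: your ``conceptually shorter alternative'' via the rigid-rotation relation to Chapple's porism would be circular in the paper's logical order, since that relation is itself deduced from the constancy of $R$, $r$, and $d$ (you correctly flag this with ``if established independently''); and the claim that the locus fills the entire circle does deserve the brief surjectivity/continuity remark you mention, which the paper itself omits.
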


\begin{proof}
Consider the explicit expressions derived for 3-periodic vertices in Appendix~\ref{app:explicit-I}. Let a first vertex $P_1=(x_1,y_1)$. From this, we obtain the center $X_3$ of the orbit's circumcircle:
{\small
\[X_3=\left[ - \,{\frac {x_1\, \left( a-b \right)  \left( -x_1^{2}
			\left( a+b \right) ^{2}+{a}^{2}b \left( 2\,a+b \right)  \right) }{2a
			\left(  \left( {a}^{2}-{b}^{2} \right) x_1^{2}+{a}^{2}{b}^{2}
			\right) }}
	 ,  {\frac { \left( a-b \right)  \left( x_1^{2} \left( a+b
	 		\right) ^{2}-{a}^{2}{b}^{2} \right) y_1}{2b \left( {a}^{2}x_1^2+{b}^{2} \left( {a}^{2}-x_1^{2} \right)  \right) }}
\right],\]
}
and radius $(a+b)/2$. We also obtain that the locus of $X_3$ is a circle with center $(0,0)$ and radius $(a-b)/2$.

\end{proof}

\begin{proposition}
Over family I 3-periodics the locus of the orthocenter $X_4$ is an ellipse of axes $a_4=(a - b)b/(a + b) $ and $b_4=(a - b)a/(a + b)$
 centered  on $O=X_1$.
\label{prop:X4_familyI}
\end{proposition}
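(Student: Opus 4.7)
The plan is to mirror the proof of Theorem~\ref{prop:R}: parametrize a 3-periodic vertex $P_1 = (x_1, y_1)$ on the outer ellipse (so $b^2 x_1^2 + a^2 y_1^2 = a^2 b^2$), recover the other two vertices $P_2, P_3$ from Poncelet closure with the incircle of radius $r = ab/(a+b)$ supplied by the preceding corollary, and then compute $X_4$ as a rational function of $x_1$ using the explicit coordinates in Appendix~\ref{app:explicit-I}.

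A convenient shortcut avoids a second round of tangent-line algebra. Since $X_1 = O$ is the origin and closed forms for both $X_2(x_1)$ and $X_3(x_1)$ are already in hand --- the former from Proposition~\ref{prop:X2_familyI}, the latter from the proof of Theorem~\ref{prop:R} --- the Euler-line identity
\[
X_4 \;=\; 3\, X_2 \;-\; 2\, X_3
\]
immediately yields $X_4(x_1) = (u(x_1), v(x_1))$, where $u$ is rational in $x_1$ alone and $v$ is linear in $y_1$ with $x_1$-rational coefficients.

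It then remains to substitute $(u,v)$ into the candidate equation
\[
\frac{u^2}{a_4^2} + \frac{v^2}{b_4^2} \;=\; 1, \qquad a_4 \;=\; \frac{(a-b)\,b}{a+b},\quad b_4 \;=\; \frac{(a-b)\,a}{a+b},
\]
clear denominators, and reduce modulo the ellipse constraint $b^2 x_1^2 + a^2 y_1^2 = a^2 b^2$. A CAS pass --- in the spirit of the proof method recipe at the end of Section~\ref{sec:review} --- verifies that the resulting polynomial identity in $(x_1, a, b)$ vanishes, and a final sanity check on a non-isosceles configuration confirms the sign conventions for $a_4, b_4$.

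The main obstacle is purely bookkeeping: the rational expressions for $X_3$ (and hence $X_4$) carry denominators of degree $\ge 4$ in $x_1$, so the cancellation is bulky rather than delicate. That a conic answer is to be \emph{expected} --- rather than some higher-degree curve --- follows structurally from the invariance of $R$ (Theorem~\ref{prop:R}) together with the circular locus of $X_3$: combined via $X_4 = 3X_2 - 2X_3$, these force $X_4$ to inherit the conic character of $X_2$. The specific orientation, with $a_4/b_4 = b/a$ (axes swapped relative to the outer ellipse), is then simply a byproduct of the algebra.
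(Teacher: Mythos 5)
Your proposal is correct and follows essentially the route the paper prescribes (the paper omits an explicit proof of this proposition, deferring to the CAS-verification recipe of Section~\ref{sec:review}): parametrize $P_1$ on the outer ellipse, express the center rationally, and verify the candidate conic symbolically. Your one genuine refinement is the Euler-line identity $X_4=3X_2-2X_3$, which lets you reuse the closed forms already obtained for $X_2$ and $X_3$ (note that on the ellipse $q_2=q_3$ in Appendix~\ref{app:explicit-I}, so the radicals cancel and $X_2$ is indeed rational in $(x_1,y_1)$); this checks out, e.g.\ at the vertices $(\pm a,0)$ and $(0,\pm b)$ one gets $|X_4|=b(a-b)/(a+b)$ and $a(a-b)/(a+b)$ respectively, matching $a_4$ and $b_4$. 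One caveat: your closing remark that the circular locus of $X_3$ and the elliptic locus of $X_2$ "force" $X_4$ to be conic is only heuristic --- a pointwise difference of a circle-valued and an ellipse-valued map need not trace a conic unless the phases align --- so the CAS substitution step remains the actual proof, as you correctly structure it.
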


\begin{proposition}
Over family I 3-periodics the locus of the   $X_5$ triangle center is a circle of radius $d=\frac{(a-b)^2}{4(a+b)}$
  centered on $O=X_1$.
\label{prop:X5}
\end{proposition}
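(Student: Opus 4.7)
The plan is to bypass the explicit-coordinate computation used in the proof of Theorem~\ref{prop:R} by invoking Feuerbach's classical theorem: for every triangle the nine-point circle (centered at $X_5$ with radius $R/2$) is internally tangent to the incircle (centered at $X_1$ with radius $r$). Consequently, for every 3-periodic in the family,
\[
|X_5 - X_1| \;=\; \left|\tfrac{R}{2} - r\right|.
\]

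Each quantity on the right is constant on Family~I: the incenter $X_1 = O$ is stationary by construction of the family, the circumradius is $R = (a+b)/2$ by Theorem~\ref{prop:R}, and the inradius is $r = ab/(a+b)$ by the preceding Corollary. Substituting,
\[
\tfrac{R}{2} - r \;=\; \frac{a+b}{4} - \frac{ab}{a+b} \;=\; \frac{(a+b)^2 - 4ab}{4(a+b)} \;=\; \frac{(a-b)^2}{4(a+b)},
\]
which is nonnegative in agreement with Euler's inequality $R \ge 2r$. Every position of $X_5$ therefore lies on the circle of radius $(a-b)^2/[4(a+b)]$ centered on $O = X_1$.

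To upgrade ``lies on'' to ``equals'' this circle, I would note that the locus is a connected subset (the 3-periodic family is a continuous closed loop and $X_5$ depends continuously on it) that contains two antipodal points, namely the values of $X_5$ at the two isosceles configurations aligned with the axes of $\mathcal{E}$; two antipodal points in a connected subset of a circle force the subset to be the entire circle. The main obstacle is conceptual rather than computational: one must be careful with the sign in Feuerbach (internal, not external, tangency, so the distance is $R/2 - r$ and not $R/2 + r$), and make sure the two inputs $R$ and $r$ really are invariant throughout the family, which has been established above. If one prefers to mirror the style of Theorem~\ref{prop:R}, the same answer can be recovered by substituting the parametrization of Appendix~\ref{app:explicit-I} into the identity $X_5 = \tfrac{1}{2}(X_3 + X_4)$ (midpoint of circumcenter and orthocenter) and simplifying via a CAS, using Proposition~\ref{prop:X4_familyI} to handle $X_4$.
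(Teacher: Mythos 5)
Your argument is correct in its main thrust and takes a genuinely different, more synthetic route than the paper, which proves this proposition (like most loci statements here) by CAS-assisted simplification of the explicit vertex parametrization of Appendix~\ref{app:explicit-I}. Your use of Feuerbach's theorem is exactly the right leverage: since the incircle is the fixed caustic (center $X_1=O$, radius $r=ab/(a+b)$ by the Corollary) and $R=(a+b)/2$ is invariant by Theorem~\ref{prop:R}, internal tangency of the nine-point circle to the incircle gives $|X_5-O|=R/2-r=(a-b)^2/\bigl(4(a+b)\bigr)$ with no further computation, and Euler's inequality settles the sign. This buys a conceptual explanation of \emph{why} the locus is a circle centered at $X_1$ (it is forced by the two invariant radii together with the stationarity of the incenter), whereas the paper's method only certifies the formula. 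It also makes transparent why the analogous statement in family II (Proposition~\ref{prop:II-loci}) has $X_5$ on a circle about $X_3$ of radius $R/2-R_N$-type quantities: there one uses $|X_5-X_3|=|X_4-X_3|/2$ instead.

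One genuine flaw: your lemma ``a connected subset of a circle containing two antipodal points must be the whole circle'' is false --- a closed semicircular arc is a counterexample. To get surjectivity, use instead that the family is invariant under reflection in both axes of $\mathcal{E}$, so the locus of $X_5$ is invariant under the composition of the two reflections, i.e.\ under the antipodal map $p\mapsto -p$; a nonempty connected proper subarc of a circle centered at $O$ can never be antipodally invariant, so the locus is the full circle. (Alternatively, track the argument of $X_5(t)$ over one period of the parametrization and check it winds once around $O$.) Also note that the two isosceles configurations ``aligned with the axes'' (apex at $(\pm a,0)$ versus at $(0,\pm b)$) generally place $X_5$ at $90^\circ$ to one another, not antipodally; the antipodal pair you want is $t=0$ versus $t=\pi$. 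With that repair the proof is complete.
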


\begin{proposition} The power of $O$ with respect to the circumcircle is invariant and  equal to $-a b$.
\end{proposition}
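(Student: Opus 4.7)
The plan is to observe that this proposition is essentially a one-line corollary of Theorem~\ref{prop:R}. Recall that the power of a point $P$ with respect to a circle with center $C$ and radius $\rho$ is $\mathrm{pow}(P) = |PC|^2 - \rho^2$. Applying this with $P = O$, $C = X_3$ and $\rho = R$, we get
\[
\mathrm{pow}(O) \;=\; |O\,X_3|^2 - R^2.
\]

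Theorem~\ref{prop:R} already gives us both quantities on the right-hand side as explicit constants of the family: the circumradius is $R = (a+b)/2$, and the circumcenter $X_3$ traces a circle of radius $d = (a-b)/2$ centered at $O$, so $|O\,X_3| = (a-b)/2$ is likewise invariant. Hence $\mathrm{pow}(O)$ is invariant simply because each of its two summands is.

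Plugging in, the computation is immediate:
\[
\mathrm{pow}(O) \;=\; \left(\tfrac{a-b}{2}\right)^{\!2} - \left(\tfrac{a+b}{2}\right)^{\!2} \;=\; \tfrac{(a-b)^2 - (a+b)^2}{4} \;=\; \tfrac{-4ab}{4} \;=\; -ab,
\]
which is the claimed value. The negative sign reflects the geometric fact that $|O X_3| < R$ always, i.e.\ $O$ sits inside every circumcircle of the family — a sanity check that is automatic here because $(a-b)/2 < (a+b)/2$ when $a,b>0$.

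There is essentially no obstacle: the work was already done in Theorem~\ref{prop:R}, and no further appeal to the explicit vertex parametrization of Appendix~\ref{app:explicit-I} is needed. The only thing worth emphasizing in the write-up is that invariance of the power follows purely from the two previously established invariants (circumradius and $|OX_3|$), so no new CAS verification is required.
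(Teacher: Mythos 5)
Your proof is correct. The identity $\mathrm{pow}(O)=|OX_3|^2-R^2$ is the standard definition of the power of a point, and Theorem~\ref{prop:R} does supply both ingredients as family invariants: $R=(a+b)/2$ and $|OX_3|=d=(a-b)/2$ (since the locus of $X_3$ is a circle of that radius centered at $O$). The arithmetic $\bigl(\tfrac{a-b}{2}\bigr)^2-\bigl(\tfrac{a+b}{2}\bigr)^2=-ab$ checks out, and the sign discussion is a nice sanity check. Your route differs from the paper's, which simply declares the proof ``direct, analogous to'' a cited theorem --- i.e., another symbolic computation of $|OX_3|^2-R^2$ from the explicit vertex parametrization. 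Your derivation is preferable in that it is self-contained, requires no new CAS work, and makes transparent \emph{why} the power is invariant (it is a consequence of two invariants already in hand) rather than merely verifying that it is. The only thing it presupposes is that Theorem~\ref{prop:R} has been fully established, which in the paper it has.
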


\begin{proof}
Direct, analogous to \cite[Thm.3]{garcia2020-new-properties}.
\end{proof}

\begin{proposition}
Over family I 3-periodics, the locus of $X_6$ is a quartic given by the following implicit equation:

\begin{align*}
    &   \left( b \left( b+2\,a \right)  \left( {a}^{2}+2\,ab+3\,{b}^{2}
 \right)  {x}^{2}+a \left( a+2\,b \right)  \left( 3\,{a}^{2}+2\,ab+{b}
^{2} \right) {y}^{2} \right) ^{2}\\
&-{a}^{2}{b}^{2} \left( a-b \right) ^{
2} \left( {b}^{2} \left( b+2\,a \right) ^{2}{x}^{2}+{a}^{2} \left( a+2
\,b \right) ^{2}{y}^{2} \right) =0
\end{align*}
\end{proposition}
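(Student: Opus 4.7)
The plan is to follow the same symbolic-computational method used for Theorem~\ref{prop:R} and Proposition~\ref{prop:X4_familyI}. Starting from the explicit parametrization of a 3-periodic vertex triple $(P_1,P_2,P_3)$ in Appendix~\ref{app:explicit-I}, with $x_1$ (the abscissa of $P_1$) as the free parameter, I compute the three side lengths $\ell_i=|P_{i+1}-P_{i-1}|$ and obtain $X_6$ as the symmedian barycentric combination
\[ X_6 \;=\; \frac{\ell_1^2\,P_1+\ell_2^2\,P_2+\ell_3^2\,P_3}{\ell_1^2+\ell_2^2+\ell_3^2}, \]
which yields a rational parametrization $(x(x_1),y(x_1))$ of the locus.

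To pass from this parametrization to an implicit equation, I would eliminate $x_1$ between the relations $x=x(x_1)$ and $y=y(x_1)$ via a resultant (or a Gr\"obner basis in the ideal generated by $x\,\mathrm{den}(x)-\mathrm{num}(x)$ and $y\,\mathrm{den}(y)-\mathrm{num}(y)$) in a CAS. The $x\leftrightarrow -x$ and $y\leftrightarrow -y$ symmetries of Family~I force only even powers of $x$ and $y$ in the eliminated polynomial, which halves the effective degree and suggests working in the new variables $u=x^2$, $v=y^2$ throughout. After extracting the geometrically meaningful irreducible factor and clearing content, one should land on the claimed quartic; expanding that quartic one can verify that the even-degree structure (only $x^4,x^2y^2,y^4,x^2,y^2$) is exactly consistent with the stated equation.

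The main obstacle is the algebraic bulk of the intermediate expressions: $\ell_i^2$ and the coordinates of $X_6$ are rational functions of $x_1$ of substantial degree, so a naive resultant risks an unmanageable blow-up. I would mitigate this by (i) reducing modulo the outer-ellipse equation $b^2x_1^2+a^2y_1^2=a^2b^2$ at every step to keep the parametrization compact, (ii) exploiting the fixed incircle of radius $r=ab/(a+b)$ from the Corollary to simplify the tangency relations defining $P_2,P_3$, and (iii) validating the final expression by checking that it vanishes on numerically computed $X_6$ points for a range of aspect ratios $a/b$ and, in particular, on the two isosceles configurations (the standard sanity check used elsewhere in the paper). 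Once the candidate quartic passes these checks on enough independent instances, the verification step outlined at the end of Section~\ref{sec:review} upgrades it to a provably correct locus.
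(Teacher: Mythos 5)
Your proposal is exactly the paper's (omitted) proof: the authors state in Section~\ref{sec:review} that such results follow from CAS-assisted symbolic computation on the explicit vertex parametrization of Appendix~\ref{app:explicit-I}, which is precisely what you carry out — forming $X_6$ from its barycentric definition, eliminating the parameter, and verifying the candidate quartic on independent configurations. The extra detail you supply (resultant/Gr\"obner elimination, exploiting the $x\mapsto -x$, $y\mapsto -y$ symmetry, reduction modulo the outer-ellipse relation) is a sound and welcome elaboration of the same route.
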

\subsection{Connection with the poristic family}

Below we show that family I 3-periodics is the image of the poristic family \cite{odehnal2011-poristic} under a variable rigid rotation about $X_1$.

Recall the poristic family of triangles with fixed, non-concentric incircle and circumcircle with centers separated by $d=\sqrt{R(R-2r)}$  \cite{gallatly1914-geometry,odehnal2011-poristic}. Let $\mathcal{I}$ be a (moving) reference frame centered on $X_1$ with one axis oriented toward $X_3$. Referring to Figure~\ref{fig:ellipse-circle-poristic} (right):

\begin{theorem}
With respect to $\mathcal{I}$, family I 3-periodics are the poristic triangle family (modulo a rigid rotation about $X_1$).
\end{theorem}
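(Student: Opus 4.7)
The plan is to show that in the moving frame $\mathcal{I}$ both the incircle and the circumcircle of a family~I 3-periodic become \emph{fixed}, and that the resulting pair of fixed circles satisfies the Chapple--Euler relation. Once that is established, Chapple's porism identifies the family with the poristic family, up to the rotational gauge left free by the choice of $\mathcal{I}$.

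First I would set up $\mathcal{I}$ explicitly. Since $X_1=O$ is stationary, the origin of $\mathcal{I}$ is fixed in the ambient plane. By Theorem~\ref{prop:R}, $X_3$ always lies at distance $d=(a-b)/2$ from $O$; because $\mathcal{I}$ is defined so that one of its axes points at $X_3$, the circumcenter has fixed coordinates $(d,0)$ in $\mathcal{I}$. The incircle is fixed as well: it is centered at $X_1=O$ with radius $r=ab/(a+b)$ (by the corollary to the Cayley condition), and the circumcircle, having fixed radius $R=(a+b)/2$ (again Theorem~\ref{prop:R}) around the now-fixed point $(d,0)$, is also rigid in $\mathcal{I}$.

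Next I would verify the Chapple--Euler identity $d^{2}=R(R-2r)$ for these quantities. A direct computation gives
\[
R-2r=\frac{a+b}{2}-\frac{2ab}{a+b}=\frac{(a-b)^{2}}{2(a+b)},
\qquad
R(R-2r)=\frac{(a-b)^{2}}{4}=d^{2},
\]
so the two fixed circles in $\mathcal{I}$ are exactly a Chapple (poristic) pair.

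Finally I would invoke Chapple's porism: any triangle inscribed in the circumcircle and circumscribing the incircle closes, and the totality of such triangles forms the 1-parameter poristic family. Each family~I 3-periodic, viewed in $\mathcal{I}$, is inscribed in the fixed circumcircle and tangent to the fixed incircle, hence is a member of that poristic family. Since both are 1-parameter Poncelet families sharing the same two circles, they coincide as sets of triangles. The ``modulo a rigid rotation about $X_1$'' clause is forced by the fact that the parameter of the family~I triangle (say, the position of its contact vertex on the outer ellipse) and the standard parameter of the poristic family need not agree; the discrepancy is an orientation-preserving isometry fixing $X_1=O$, i.e.\ a rotation about $X_1$. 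The only nontrivial step is the frame reduction --- once the two circles are shown fixed in $\mathcal{I}$, the rest is Poncelet/Chapple applied mechanically; I expect no serious obstacle beyond bookkeeping the rotational gauge.
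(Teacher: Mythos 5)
Your proposal is correct and follows essentially the same route as the paper, whose proof is simply the one-line observation that $R$, $r$, and $d$ are all constant over the family. You usefully make explicit what the paper leaves implicit --- namely the verification of the Chapple--Euler relation $d^{2}=R(R-2r)$ for the fixed circle pair in the moving frame $\mathcal{I}$ --- but this is a fleshing-out of the same argument rather than a different one.
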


\begin{proof}
This stems from the fact that $R$, $r$, and $d$ are constant. 
\end{proof}

As proved in \cite[Thm.3]{garcia2020-poristic}:

\begin{obs}
The $X_1$-centered circumconic to the poristic family is a rigidly-rotating ellipse with axes $R+d$ and $R-d$.
\end{obs}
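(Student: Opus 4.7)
The plan is to deduce the observation directly from Theorem~2, which identifies family~I 3-periodics with the poristic family up to a rigid rotation about $X_1$, together with a short bookkeeping check of the axis lengths.

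First, let $\mathcal{E}$ denote the outer ellipse of family~I: it is centered on $X_1=O$ with semi-axes $(a,b)$, and by construction it is the (unique) $X_1$-centered circumconic of every family~I 3-periodic. By Theorem~2, passing from a family~I triangle to the corresponding poristic triangle is realized by a rigid rotation about $X_1$, which is an isometry fixing $X_1$; it therefore carries $\mathcal{E}$ to an ellipse $\mathcal{E}'$ still centered at $X_1$, with the same axis lengths, that circumscribes the poristic triangle. Uniqueness of an origin-centered conic through three generic points forces $\mathcal{E}'$ to be \emph{the} $X_1$-centered circumconic of the poristic triangle, and as the poristic triangle moves, $\mathcal{E}'$ rotates rigidly about $X_1$, precisely as claimed.

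Second, I match the axis lengths to $R\pm d$. Theorem~2 gives $R=(a+b)/2$, Corollary~1 gives $r=ab/(a+b)$, and Euler's formula in the poristic family reads $d=\sqrt{R(R-2r)}$. A direct simplification yields
\[
R^{2}-2Rr=\frac{(a+b)^{2}}{4}-ab=\frac{(a-b)^{2}}{4},
\]
so $d=(a-b)/2$ (taking $a\ge b$). Hence $a=R+d$ and $b=R-d$, completing the identification.

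The only step that deserves care is the strict rigidity of the transformation in Theorem~2: one must confirm it is a pure rotation about $X_1$ rather than a general similarity or affinity, so that $\mathcal{E}$ is transported isometrically and its axis lengths transfer without rescaling. This is exactly what the proof of Theorem~2 establishes, via the simultaneous constancy of $R$, $r$, and $d$ which pins the moving frame $\mathcal{I}$ down to rotation about $X_1$ (any reflection being absorbable into the parametrization). With this in hand, no further computation is needed.
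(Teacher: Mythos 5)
Your argument is correct, but it takes a genuinely different route from the paper: the paper offers no proof of this Observation at all, simply importing it as Theorem~3 of the cited work on the poristic family, where it is established intrinsically for poristic triangles; the paper then uses it in the \emph{opposite} direction, as an independent consistency check that $R+d=a$. You instead derive the Observation internally, by transporting the outer ellipse of family~I (which is by construction the unique $X_1$-centered circumconic of each family~I triangle) through the rigid rotation about $X_1$ furnished by the identification theorem, and then computing $d=\sqrt{R(R-2r)}=(a-b)/2$ from $R=(a+b)/2$ and $r=ab/(a+b)$ so that the preserved axes $a,b$ become $R+d,R-d$. The computation checks out, the uniqueness of the centered circumconic is used correctly, and your closing caveat about rigidity is exactly the right thing to flag. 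What your approach buys is self-containment within this paper at the price of generality: strictly speaking you prove the statement only for poristic families arising as rotated images of a family~I pair, so to recover the Observation for an \emph{arbitrary} poristic family you should add the one-line surjectivity remark that any admissible $(R,d)$ with $0<d<R$ is realized by taking $a=R+d$, $b=R-d$, for which $r=ab/(a+b)=(R^2-d^2)/(2R)$ automatically satisfies Euler's relation $d^2=R(R-2r)$. (Also, minor bookkeeping: $R=(a+b)/2$ is the circumradius theorem, not the rigid-rotation theorem, and $r=ab/(a+b)$ is the corollary of the Cayley condition.)
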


Since this circumellipse is identical (up to rotation) to the outer ellipse of family I, then $R+d=a$ which is coherent with Proposition~\ref{prop:R}.

Furthermore, because poristic triangles are the image of billiard 3-periodics under a (varying) affine transform \cite[Thm 4]{garcia2020-poristic}, it displays the same scale-free invariants.

\begin{corollary}\label{cor:soma_produto}
Family I 3-periodics conserve the sum of cosines, product of half-sines, and all scale-free invariants.

\begin{equation}
     \sum_{i=1}^3\cos\theta_i=\frac{a^2 + 4ab + b^2}{(a + b)^2},\;\;\;\prod_{i=1}^3\sin\frac{\theta_i}{2}=\frac{ab}{2(a+b)^2}\cdot
\label{eqn:I-sum-cos}
\end{equation}
\end{corollary}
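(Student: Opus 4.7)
The plan is to derive both closed-form invariants from the two classical Euler-type identities which hold for any triangle with inradius $r$, circumradius $R$, and interior angles $\theta_i$:
$$\sum_{i=1}^3 \cos\theta_i = 1 + \frac{r}{R}, \qquad \prod_{i=1}^3 \sin\frac{\theta_i}{2} = \frac{r}{4R}.$$
Family~I has by construction a fixed incircle of radius $r = ab/(a+b)$ (the corollary following Proposition~\eqref{eqn:pair-n3}) and, by Theorem~\ref{prop:R}, a constant circumradius $R = (a+b)/2$. Hence $r$ and $R$ are already established as invariants across the family, and substitution is the only remaining step: $1+r/R = 1+2ab/(a+b)^2 = (a^2+4ab+b^2)/(a+b)^2$ and $r/(4R) = ab/[2(a+b)^2]$, matching \eqref{eqn:I-sum-cos} exactly.

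For the final clause ``and all scale-free invariants,'' I would instead invoke the transformation chain assembled in this section. Family~I is the image of the poristic family under a variable rigid rotation about $X_1$ (the preceding theorem), and by \cite[Thm.~4]{garcia2020-poristic} the poristic family is itself the image of billiard $3$-periodics under a variable similarity. Rigid rotations are isometries and similarities preserve all ratios of lengths as well as all angles, so every scale-free function of the sidelengths or interior angles which is invariant on the confocal family descends to an invariant on Family~I. This gives an independent derivation of the sum of cosines (invariant on the billiard by \cite{akopyan2020-invariants,bialy2020-invariants}) and shows that the product of half-sines, equal to the scale-free ratio $r/(4R)$, is automatically constant.

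The main obstacle is purely organizational rather than analytic: one must make sure that both $r$ and $R$ are established as constants before citing the angle identities, and that ``scale-free'' is properly interpreted as invariance under the specific variable similarity that carries the confocal family onto the poristic one. Both issues are dispatched by citing the preceding theorem and corollary in the correct order; once this is done, the closed-form expressions fall out by a one-line computation.
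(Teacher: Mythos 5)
Your proof is correct. For the two closed-form identities you take a more self-contained route than the paper: the paper states the corollary immediately after observing that family~I is a rigid rotation of the poristic family, which in turn is a variable similarity image of billiard 3-periodics, and lets the explicit values of \eqref{eqn:I-sum-cos} be read off from that chain (the same identity $\sum\cos\theta_i=1+r/R$ is only deployed later, inside the proof of Theorem~\ref{thm:famI-confocal-cos}, to compare with the confocal pair). You instead derive both formulas directly from the already-established constants $r=ab/(a+b)$ and $R=(a+b)/2$ via the Euler-type identities $\sum\cos\theta_i=1+r/R$ and $\prod\sin(\theta_i/2)=r/(4R)$; the arithmetic checks out, and this has the advantage of not depending on the transformation results at all for the quantitative claims. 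For the qualitative clause about all scale-free invariants you fall back on the same rotation-plus-similarity chain the paper uses, and you are right to insist on \emph{similarity} rather than the ``affine transform'' wording that appears in the sentence just before the corollary --- an affine map would not preserve angles or length ratios, so the similarity statement from the introduction and from \cite{garcia2020-poristic} is the one actually needed. In short: same skeleton for the ``scale-free'' clause, a cleaner and independent computation for the displayed formulas.
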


\begin{figure}
    \centering
    \includegraphics[width=\textwidth]{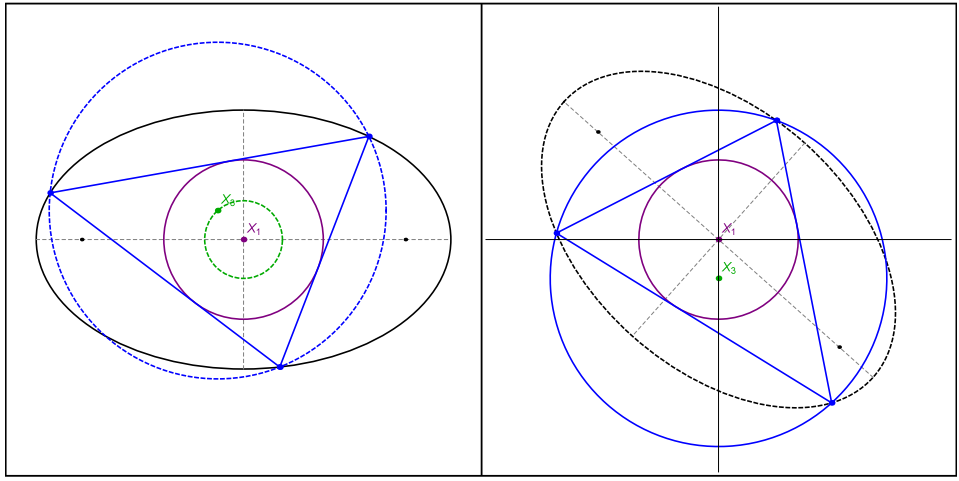}
    \caption{Family I 3-periodics (left) are identical (up to rotation) to the family of poristic triangles (right) \cite{gallatly1914-geometry}, if the former is observed with respect to a reference system where $X_1$ and $X_3$ are fixed. The fixed incircle (resp. circumcircle) are shown purple (resp. blue). The original outer ellipse (black on both drawings) becomes the $X_1$-centered circumellipse in the poristic case. Over the family, this ellipse is known to rigidly rotate about $X_1$ with axes $R+d,R-d$, where $d=|X_3-X_1|$ \cite{garcia2020-poristic}. \href{https://youtu.be/ML_AZoX736w}{Video}}
    \label{fig:ellipse-circle-poristic}
\end{figure}

Note that invariant sum of cosines for family I N-periodics was proved for all $N$ in \cite[Corollary 6.4]{akopyan2020-invariants}. In fact:

\begin{theorem}
Let $(\mathcal{E}_I,\mathcal{E}_I'')$ be a confocal pair of ellipses which is an affine image of a family I pair. Both families have invariant and identical sums of cosines.
\label{thm:famI-confocal-cos}
\end{theorem}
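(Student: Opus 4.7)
The plan is to reduce the claim to an algebraic identity between two already-known invariant values, connected through an explicit affine map. Since any two concentric, axis-aligned ellipse pairs satisfying Cayley's $N=3$ condition are related by a diagonal affine transformation, I would first seek a stretch $T:(x,y)\mapsto(x,y/\beta)$ taking the family I pair (with outer semi-axes $(a,b)$ and inner radius $r=ab/(a+b)$) to a confocal pair $(\mathcal{E}_I,\mathcal{E}_I'')$. Imposing the confocal condition $A^2-B^2=A''^2-B''^2$ on the images of $\mathcal{E}$ and the incircle forces $\beta^2=(b^2-r^2)/(a^2-r^2)$, which after substituting $r$ simplifies to $\beta^2=b^3(b+2a)/[a^3(a+2b)]$. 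Because Cayley's closure is affine-invariant, 3-periodics in family I correspond bijectively to 3-periodics of the resulting billiard on $(\mathcal{E}_I,\mathcal{E}_I'')$.

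By Corollary~\ref{cor:soma_produto}, the family I sum of cosines is $(a^2+4ab+b^2)/(a+b)^2$. The confocal sum is likewise invariant by \cite{akopyan2020-invariants,bialy2020-invariants}, and for 3-periodics it is a rational function of the outer semi-axes $A=a$ and $B=b/\beta$ alone (the caustic being determined by $A,B$ through Cayley's relation). The theorem therefore reduces to checking that substituting the above $\beta^2$ into this function yields $(a^2+4ab+b^2)/(a+b)^2$.

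An alternative route bypasses the general confocal formula entirely. Since each of the two sums is independent of the specific 3-periodic within its own family, it suffices to evaluate them at a convenient configuration: the isosceles 3-periodic in family I, and the isosceles 3-periodic in its confocal image (the affine map need not send one to the other, as affine maps do not preserve angles). Using the explicit parametrizations of Appendices~\ref{app:explicit-I} and \ref{app:explicit-confocal}, one computes the three cosines symbolically in each case, sums them, and checks the common value.

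The main obstacle is the symbolic simplification. The confocal expression for $\sum\cos\theta_i$ is unwieldy in its raw form, and collapsing it to the target requires combining Cayley's $N=3$ relation with the specific form of $\beta^2$ above. A computer algebra system would be essentially required here, following the verification methodology outlined in Section~\ref{sec:review}: derive candidate simplified forms on an isosceles orbit and then confirm they hold across the full one-parameter family.
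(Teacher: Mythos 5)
Your first route is essentially the paper's own proof run in the opposite direction: the paper starts from the confocal pair and applies the $x$-stretch $T(x,y)=(\tfrac{\beta''}{\alpha''}x,\,y)$ sending the caustic to a circle, then checks that substituting $a=\alpha\beta''/\alpha''$, $b=\beta$ into \eqref{eqn:I-sum-cos} reproduces $1+r/R$ with $r/R$ as in \eqref{eqn:rOvR}, while your $y$-stretch with $\beta^2=b^3(b+2a)/[a^3(a+2b)]$ reduces to the very same algebraic identity (which does close, since under your substitution $\delta$ collapses to $a^2(a^2+ab+b^2)/(b(b+2a))$ and $r/R$ to $2ab/(a+b)^2$). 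The proposal is correct and matches the paper's approach.
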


\begin{proof}
Let $\alpha,\beta$ and $\alpha'',\beta''$ denote the semi-axes of $\mathcal{E}_I$ and $\mathcal{E}_I''$, respectively. For the pair to admit a 3-periodic family, the latter are given by \cite{garcia2019-incenter}:

\[\alpha''= \frac{\alpha(\delta-\beta^2)}{{\alpha^2-\beta^2}},\;\;\; \beta''= \frac{\beta(\alpha^2-\delta)}{{\alpha^2-\beta^2}}\cdot\]

Consider the following affine transformation:
\[T(x,y)=\left(\frac{\beta''}{\alpha''}x, y\right). \]

This takes $\mathcal{E}_I$ to an ellipse with semi-axes $(a,b)$, $a= \alpha \frac{\beta''}{\alpha''}$ and $b=\beta$ and the caustic $\mathcal{E}_I''$ to a concentric circle of radius $\beta''$.

In \cite[Thm.1]{garcia2020-new-properties} the following expression was given for invariant $r/R$ in the confocal pair:

\begin{equation}
\frac{r}{R}=\frac{2 (\delta-\beta^2)(\alpha^2-\delta)}{(\alpha^2-\beta^2)^2},\;\;\;\delta = \sqrt{\alpha^4-\alpha^2 \beta^2+\beta^4}\cdot
\label{eqn:rOvR}
\end{equation}

Recall that for any triangle, $\sum_{i=1}^3\cos\theta_i=1+r/R$ \cite[Circumradius, Eqn. 4]{mw}. Plugging $a= \alpha \frac{\beta''}{\alpha''}$ and $b=\beta$ into to \eqref{eqn:I-sum-cos} yields \eqref{eqn:rOvR} plus one.
\end{proof}

It turns out that the proof of \cite[Corollary 6.4]{akopyan2020-invariants} implies that for all N, the cosine sum for family I N-periodics is invariant and identical to the one obtained with its confocal affine image \cite{akopyan2020-invariants}.

A known relation for triangles is that $R\,r=(s_1 s_2 s_3)/(4 s)$, where $s_1,s_2,s_3$ are sidelengths and $s=(s_1+s_2+s_3)/2$ is the semiperimeter. Since both $R$ and $r$ are conserved:

\begin{corollary}
The quantity $(s_1s_2s_3)/(4 s)$ is conserved and is equal to $ab/2.$
\end{corollary}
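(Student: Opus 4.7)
The plan is to invoke the standard triangle identity $Rr = (s_1 s_2 s_3)/(4s)$ and then substitute the two invariants of family I already established in the excerpt. The identity itself follows immediately from the two classical area formulas for a triangle: $\mathrm{Area} = rs$ and $\mathrm{Area} = (s_1 s_2 s_3)/(4R)$, whose ratio eliminates the area and yields $Rr = (s_1 s_2 s_3)/(4s)$. This is a one-line derivation and I would simply cite it (e.g., the \textit{Law of Sines} and the \textit{inradius--area} relation) rather than reproduce it.

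The key step is then to plug in the constants of the family. From Theorem~\ref{prop:R}, the circumradius is the invariant $R = (a+b)/2$. From the Corollary to Proposition~1 (the one on Cayley's closure condition), the incircle of family~I has fixed radius $r = ab/(a+b)$. Multiplying gives
\[
Rr \;=\; \frac{a+b}{2} \cdot \frac{ab}{a+b} \;=\; \frac{ab}{2},
\]
which is manifestly independent of the position of the 3-periodic in the family. Combining with $Rr = (s_1 s_2 s_3)/(4s)$ yields the claimed identity.

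There is essentially no obstacle: the content of the statement is entirely packaged in the two preceding invariants of family~I and the classical triangle identity. The only thing worth flagging for the reader is that even though the individual sidelengths $s_1, s_2, s_3$ and the semiperimeter $s$ all vary along the family, the particular rational combination $(s_1 s_2 s_3)/(4s)$ is forced to be constant because it equals $Rr$, a product of two quantities each of which is separately fixed. Thus the corollary can be presented as a short two-line proof immediately after restatement.
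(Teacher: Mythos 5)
Your proof is correct and follows exactly the paper's route: the authors also state the identity $R\,r=(s_1s_2s_3)/(4s)$ immediately before the corollary and deduce the constant $ab/2$ from the already-established invariants $R=(a+b)/2$ and $r=ab/(a+b)$. Nothing is missing; your derivation of the identity from $\mathrm{Area}=rs$ and $\mathrm{Area}=(s_1s_2s_3)/(4R)$ is a welcome extra line of justification.
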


\section{Family II: Outer Circle, Inner Ellipse}
\label{sec:II}
This family is inscribed in a circle of radius $R$ centered on $O$ and circumscribes a  concentric ellipse with semi-axes $a,b$; see Figure~\ref{fig:II-loci}. An explicit parametrization appears in Appendix~\ref{app:explicit-II}.

For the $N=3$ case, \eqref{eqn:pair-n3} implies $R=a+b$. By definition $X_3$ is stationary at $O$ and $R$ is the (invariant) circumradius. As shown in Figure~\ref{fig:II-loci}:

\begin{proposition}
Over family II 3-periodics, the loci of the orthocenter $X_4$ and nine-point center $X_5$ are concentric circles centered on $X_3=O$, with radii $2 d'$ and $d'$ respectively, where $d'=(a-b)/2$ .
\label{prop:II-loci}
\end{proposition}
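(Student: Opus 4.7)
The plan is to leverage two elementary facts about triangles to reduce both claims to a single invariance. Since $X_3=O$ is fixed, take the origin at $X_3$; the three orbit vertices $P_1,P_2,P_3$ then satisfy $|P_i|=R=a+b$, and the standard Euler-line identity gives $X_4=P_1+P_2+P_3$ together with $X_5=X_4/2$. So it suffices to show that $|X_4|$ is constant over the family and equal to $a-b=2d'$; the conclusion for $X_5$ follows instantly from the midpoint relation.

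To compute $|X_4|^2$, expand the squared norm and use $|P_i|^2=R^2$ together with the polarization identity $|P_j-P_k|^2=2R^2-2\,P_j\cdot P_k$ to derive the familiar formula
\begin{equation*}
|X_4|^2 \;=\; 9R^2 - \bigl(s_1^2+s_2^2+s_3^2\bigr),
\end{equation*}
where $s_i=|P_j-P_k|$ are the sidelengths. Since $R=a+b$ is fixed, $|X_4|$ is constant precisely when $\sum s_i^2$ is, and the latter is one of the advertised invariants of family II in the introduction. Granting that invariant together with the anticipated closed form $\sum s_i^2 = 4(2a+b)(a+2b)$, a direct calculation gives $|X_4|^2 = 9(a+b)^2 - 4(2a+b)(a+2b) = (a-b)^2$, whence $|X_4|=a-b=2d'$ and $|X_5|=|X_4|/2=d'$.

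To establish both the invariance and the closed form of $\sum s_i^2$, I would follow the CAS-based template of Section~\ref{sec:review}: start from the explicit parametrization of an isosceles 3-periodic in Appendix~\ref{app:explicit-II}, compute $\sum s_i^2$ symbolically, simplify to obtain the candidate $4(2a+b)(a+2b)$, and then verify that this value persists for generic (non-isosceles) 3-periodics across a sampling of pair parameters $(a,b)$.

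The main obstacle is the symbolic simplification of $\sum s_i^2$; the Euler-line reductions and the final arithmetic are routine. One could equally well bypass the sidelengths and compute $|X_4|^2$ directly from the Appendix parametrization, showing it collapses to $(a-b)^2$; the two routes are of comparable difficulty, but the sidelength route has the advantage of enlisting one of the paper's stated family-II invariants to do the geometric work.
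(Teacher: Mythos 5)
Your proof is correct, and it takes a genuinely different route from the paper's, which simply invokes CAS-assisted algebraic simplification of the loci of $X_4$ and $X_5$ computed directly from the vertex parametrization. You instead reduce both loci to a single scalar invariant: with the origin at the stationary circumcenter, the vector identity $X_4=P_1+P_2+P_3$ and the midpoint relation $X_5=X_4/2$ turn the claim into the constancy of $|X_4|^2=9R^2-\sum s_i^2$, and the paper's own invariant $L_2=\sum s_i^2=4(a+2b)(2a+b)$ (stated as a separate theorem just after this proposition, and proved there independently from the Appendix parametrization, so there is no circularity) yields $|X_4|^2=9(a+b)^2-4(2a+b)(a+2b)=(a-b)^2$ exactly as you compute. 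What your approach buys is conceptual economy: it explains \emph{why} the radii are $2d'$ and $d'$ and why the two circles are concentric with $O$, replaces two vector-locus simplifications by one scalar one, and would generalize to any Poncelet family with a fixed circumcircle and invariant $\sum s_i^2$. The one point you leave implicit, which the paper's terse proof also glosses over, is that the locus is the \emph{full} circle rather than a proper arc; constancy of $|X_4|$ only places $X_4$ on the circle, and surjectivity requires a short continuity/monotonicity argument (e.g., tracking the argument of $X_4$ as $P_1$ traverses the outer circle). This is a minor omission by the paper's own standard, but worth a sentence if you want the statement ``the locus is a circle'' in its strong form.
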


\begin{proof}
CAS-assisted algebraic simplification.
\end{proof}

\begin{figure}
    \centering
    \includegraphics[width=.66\textwidth]{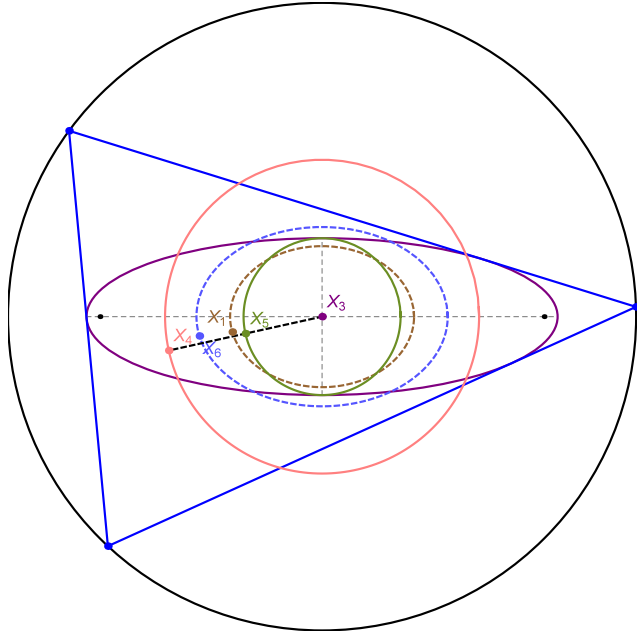}
    \caption{Family II, the $N=3$ case: The loci of both orthocenter $X_4$ (pink) and nine-point center $X_5$ (olive green) are concentric with the external circle (black), with radii $2d'$ and $d'$, respectively. I.e., $|X_4-X_5|=d'$. In contradistinction to the elliptic billiard, the locus of the incenter $X_1$ (dashed brown) is non-elliptic while that of the symmedian point $X_6$ (dashed blue) is an ellipse. \href{https://youtu.be/8xlYaQfQCTw}{Video}.}
    \label{fig:II-loci}
\end{figure}

Recall that in the confocal pair the locus of $X_1$ (resp. $X_6$) is an ellipse (resp. a quartic) \cite{garcia2020-ellipses}; see Appendix~\ref{app:loci-x1x6}. Interestingly:

\begin{proposition}
Over family II 3-periodics, the locus of the symmedian point $X_6$ (resp. the incenter $X_1$) is an ellipse (resp. the convex component of a quartic -- note the other component corresponds to the locus of the 3 excenters which can be concave). These are given by:

\begin{align*}
\text{locus of $X_6$}: &\; \frac{x^2}{a_6^2}+\frac{y^2}{b_6^2}=1, \; a_6=\frac{ a^2 - b^2}{a + 2 b},\;\;  b_6=\frac{ a^2 - b^2}{2a +  b},\\
\text{locus of $X_1$}:&\; \left( {x}^{2}+{y}^{2} \right) ^{2}-2\, \left( a+3\,b \right) 
 \left( a+b \right) {x}^{2}-2\, \left( a+b \right)  \left( 3\,a+b
 \right) {y}^{2} \\
 &+\left( a^2-b^2 \right) ^{2}=0\cdot
 \label{thm:II-loci_X1}
\end{align*}
\end{proposition}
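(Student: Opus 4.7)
My plan is to follow the CAS-assisted protocol sketched at the end of Section~\ref{sec:review} and used throughout \cite{garcia2020-ellipses}. Starting from the explicit vertex parametrization $P_i(t)$ of Appendix~\ref{app:explicit-II}, I would express each triangle center symbolically as a function of $t$, substitute those coordinates into the proposed implicit equation of the locus, and simplify (using the Poncelet closure identity $R=a+b$ that follows from \eqref{eqn:pair-n3}) until the result collapses to zero identically in $t$. That each algebraic component is fully and smoothly swept follows from real-analyticity of the parametrization and a check at a few representative orbits (e.g.\ axis-symmetric configurations).

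For the symmedian point $X_6$, one uses the standard barycentric formula
\[
X_6(t) \;=\; \frac{s_1^{2}\,P_1 + s_2^{2}\,P_2 + s_3^{2}\,P_3}{s_1^{2} + s_2^{2} + s_3^{2}},
\]
where $s_i$ is the sidelength opposite $P_i$. Since $s_i^{2}$ is polynomial in the vertex coordinates, each component of $X_6$ is a rational function of $t$. Substituting into
\[
\frac{x^{2}}{a_6^{2}} + \frac{y^{2}}{b_6^{2}} - 1,\qquad a_6 = \frac{a^{2}-b^{2}}{a+2b},\;\; b_6 = \frac{a^{2}-b^{2}}{2a+b},
\]
and clearing denominators should produce the zero polynomial after symbolic simplification. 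Non-degeneracy of the ellipse is immediate from $a_6,b_6>0$ for $a>b>0$.

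For the incenter $X_1=(s_1P_1+s_2P_2+s_3P_3)/(s_1+s_2+s_3)$ the weights are the sidelengths themselves, so the coordinates $X_1(t)$ carry three square roots and the locus is generically of higher degree. I would introduce auxiliary symbols $u_i$ with $u_i^{2}=s_i^{2}$, write $X_1$ as a rational function of $(t,u_1,u_2,u_3)$, and then eliminate $t$ and the $u_i$ via successive resultants (or a Gr\"obner basis) to obtain a single polynomial relation $F(x,y)=0$ simultaneously satisfied by $X_1$ and the three excenters, corresponding to the four sign choices of the radicals. Discarding any extraneous factor introduced by squaring, the surviving factor should match the claimed quartic. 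The separation into a convex ``incenter'' oval and an ``excenter'' component is then established by evaluating at the axis-symmetric 3-periodic, where $X_1$ sits close to $O$ while the excenters lie much farther out, and extending by continuity over the smooth one-parameter family.

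The main obstacle is purely the algebraic bulk of the radical elimination: intermediate resultants in $(t,u_1,u_2,u_3)$ are large enough to challenge a naive CAS run, and one must check carefully that every spurious factor produced by squaring really corresponds to an excenter branch rather than a parasitic root. A lighter, secondary subtlety is verifying that the quartic has exactly two disjoint real ovals and that only one of them is convex; this can be done by noting that setting $y=0$ (resp.\ $x=0$) in the quartic produces a biquadratic in $x$ (resp.\ $y$) with two positive roots for $x^{2}$ (resp.\ $y^{2}$), hence four real axis-intersections arranged in two symmetric pairs, and then inspecting the sign of $F$ along the axes between those intersections.
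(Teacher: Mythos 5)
Your proposal matches the paper's approach: the authors prove this proposition by exactly the CAS-assisted simplification protocol you describe, starting from the explicit vertex parametrization of Appendix~\ref{app:explicit-II}, and they record the proof only as ``CAS-assisted simplification.'' Your additional care with the radical elimination for $X_1$ and the identification of the excenter branch is a sensible elaboration of the same method rather than a different route.
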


\begin{proof}
CAS-assisted simplification.
\end{proof}

\noindent Let $s_i$ denote the sidelengths of an $N$-periodic.

\begin{theorem}
Family II 3-periodics conserve $L_2=\sum_{i=1}^3 s_i^2=4(a + 2b)(2a + b)$.
\end{theorem}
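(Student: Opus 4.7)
The plan is to reduce the claim to two facts already in play: (i) the circumradius $R=a+b$ is fixed (by construction of family II), and (ii) the product of cosines of the triangle's angles is invariant, as asserted earlier in the introduction and identified with the product-of-cosines invariant of the excentral triangles in the confocal affine image \cite{akopyan2020-invariants,bialy2020-invariants}. Since every 3-periodic is inscribed in the circle of radius $R$, the law of sines gives $s_i=2R\sin\theta_i$ (where $\theta_i$ is the angle at the $i$th vertex), so
\[
L_2 \;=\; 4R^2\sum_{i=1}^3\sin^2\theta_i.
\]

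Next I would invoke the elementary triangle identity
\[
\sin^2\theta_1+\sin^2\theta_2+\sin^2\theta_3 \;=\; 2+2\cos\theta_1\cos\theta_2\cos\theta_3,
\]
which follows from $\cos 2\theta_1+\cos 2\theta_2+\cos 2\theta_3=-1-4\cos\theta_1\cos\theta_2\cos\theta_3$ (a one-line consequence of $\theta_1+\theta_2+\theta_3=\pi$). Substituting,
\[
L_2 \;=\; 8R^2\bigl(1+\cos\theta_1\cos\theta_2\cos\theta_3\bigr),
\]
and the invariance of $L_2$ is immediate from the joint invariance of $R$ and of the cosine product.

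To pin down the constant I would evaluate at a symmetric (isosceles) configuration using the explicit parametrization of Appendix~\ref{app:explicit-II}, a short computation yielding $\cos\theta_1\cos\theta_2\cos\theta_3=ab/\bigl(2(a+b)^2\bigr)$. Then
\[
L_2 \;=\; 8(a+b)^2+4ab \;=\; 8a^2+20ab+8b^2 \;=\; 4(a+2b)(2a+b),
\]
which matches the claim.

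The main obstacle is that the cosine-product invariance for family II is asserted but not separately proved in this excerpt; a reader wanting a self-contained argument can instead run the CAS pipeline described in Section~\ref{sec:review}: substitute the vertex formulas of Appendix~\ref{app:explicit-II} into $\sum s_i^2$, simplify with the help of a CAS to obtain the candidate $4(a+2b)(2a+b)$, and verify that this candidate holds independently of the free 3-periodic parameter (and in particular agrees at the easily-checked isosceles configuration). Either route — trigonometric reduction or direct symbolic verification — gives the stated invariant.
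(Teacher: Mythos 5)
Your argument is correct, but it takes a genuinely different route from the paper's. The paper's proof is a one-line direct computation: substitute the explicit vertex parametrization of Appendix~\ref{app:explicit-II} into $\sum s_i^2$ and simplify (CAS-assisted), exactly the fallback you describe in your last paragraph. Your main route instead derives the invariance of $L_2$ as a formal consequence of two other invariants: writing $s_i=2R\sin\theta_i$ and using the triangle identity $\sum\sin^2\theta_i=2+2\prod\cos\theta_i$ gives $L_2=8R^2\bigl(1+\prod\cos\theta_i\bigr)$, so invariance of $R$ and of the cosine product immediately yields invariance of $L_2$; the numerical check $8(a+b)^2+4ab=4(a+2b)(2a+b)$ is right, and the value $\prod\cos\theta_i=ab/\bigl(2(a+b)^2\bigr)$ you use agrees with Lemma~\ref{lem:cos-prod-II}. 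What your approach buys is an explanation: it exposes $L_2$-invariance as dependent on (in fact equivalent to, given fixed $R$) the cosine-product invariance, which ties this theorem to Theorem~\ref{thm:famII-confocal-cos-prod} and the excentral picture; what it costs is logical ordering, since Lemma~\ref{lem:cos-prod-II} appears only later in the paper and is itself proved by the same CAS pipeline, so your reduction does not remove the need for a symbolic computation somewhere --- it only relocates it. You flag this honestly, and either route is sound.
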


\begin{proof}
Direct, using the parametrization for vertices in Appendix \ref{app:explicit-II}.
\end{proof}

Note: the above is true for all $N$ \cite[Thm.8, corollary]{akopyan2020-invariants}.

\subsection{Family II and the poristic family}

Below we show that the orthic triangles of Family II 3-periodics are the image of the poristic family \cite{odehnal2011-poristic} under a variable rigid rotation about $X_3$.

\begin{lemma}
Family II 3-periodics conserve the product of cosines, given by:
\[ \prod_{i=1}^3{\cos\theta_i}=\frac{a b}{2 (a+b)^2}\cdot\]
\label{lem:cos-prod-II}
\end{lemma}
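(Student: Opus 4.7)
The plan is to combine a classical Euler-type identity with the invariants already at hand. For any triangle with circumradius $R$, circumcenter $O$, and orthocenter $H$, one has
\[
OH^2 \;=\; R^2\bigl(1 - 8\cos\theta_1\cos\theta_2\cos\theta_3\bigr),
\]
or equivalently $\prod_{i=1}^{3}\cos\theta_i = (R^2 - OH^2)/(8R^2)$. So once $R$ and $|OH|$ are shown invariant along the family, the product of cosines is immediately determined.

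For Family II the circumcenter $X_3$ is fixed at $O$ by construction, and the Cayley condition \eqref{eqn:pair-n3} forces $R = a+b$. Proposition~\ref{prop:II-loci} then gives that the orthocenter $X_4$ traces a circle of radius $2d'=a-b$ centered at $X_3$, so $|OH|=a-b$ is constant along the family.

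Substituting these two values into the identity yields
\[
\prod_{i=1}^{3}\cos\theta_i \;=\; \frac{(a+b)^2-(a-b)^2}{8(a+b)^2} \;=\; \frac{4ab}{8(a+b)^2} \;=\; \frac{ab}{2(a+b)^2},
\]
which is the claimed value.

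There is no real obstacle; the only point to be careful about is invoking the correct form of Euler's identity. A sanity check is available through the alternative form $OH^2 = 9R^2 - \sum_i s_i^2$: combined with the invariant $L_2 = 4(a+2b)(2a+b)$ from the preceding theorem, this gives $|OH|^2 = 9(a+b)^2 - 4(a+2b)(2a+b) = (a-b)^2$, independently confirming Proposition~\ref{prop:II-loci} and closing the argument without any further CAS work.
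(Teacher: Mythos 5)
Your proof is correct, and it takes a genuinely different route from the paper, whose proof of this lemma is a one-line ``CAS-assisted simplification.'' You instead deduce the product of cosines synthetically from two invariants already established earlier in the text: the circumradius $R=a+b$ (forced by the Cayley condition \eqref{eqn:pair-n3} together with $X_3=O$) and the distance $|X_3X_4|=2d'=a-b$ from Proposition~\ref{prop:II-loci}, fed into the classical Euler relation $OH^2=R^2(1-8\cos\theta_1\cos\theta_2\cos\theta_3)$. The arithmetic checks out: $\bigl((a+b)^2-(a-b)^2\bigr)/\bigl(8(a+b)^2\bigr)=ab/\bigl(2(a+b)^2\bigr)$. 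What this buys is an actual explanation of \emph{why} the product of cosines is conserved --- it is equivalent to the joint invariance of $R$ and $|OH|$ --- rather than a verification. Your closing remark is also a genuine improvement: via $OH^2=9R^2-\sum_i s_i^2$ and the invariant $L_2=4(a+2b)(2a+b)$, you rederive $|OH|=a-b$ independently, so the only computational input you ultimately need is the $L_2$ theorem (itself proved directly from the vertex parametrization), making Proposition~\ref{prop:II-loci}'s radius for the $X_4$ locus a corollary rather than a prerequisite. The one thing to keep in mind is that your argument is not fully self-contained --- it leans on results whose proofs in the paper are themselves CAS-assisted --- but as a derivation of this lemma from the surrounding material it is both valid and more informative than the paper's.
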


\begin{proof}
CAS-assisted simplification.
\end{proof}

The orthic triangle has vertices at the feet of a triangle's altitudes \cite{mw}. Let $R_h$ denote its circumradius. A known property is that $R_h=R/2$ \cite[Orthic Triangle, Eqn. 7]{mw}. Therefore, it is invariant over family II 3-periodics. Referring to Figure~\ref{fig:II-poristic} (left):

\begin{proposition}
The inradius $r_h$ of family II orthic triangles is invariant and given by $r_h=a b/(a+b)$. 
\label{prop:II-orthic-radii}
\end{proposition}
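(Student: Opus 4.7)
The plan is to use a classical identity relating the inradius of the orthic triangle to the parent triangle's circumradius and the product of the cosines of its angles, then invoke the invariants already established for Family II.

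First I would observe that every Family II 3-periodic is acute. By Lemma~\ref{lem:cos-prod-II}, $\prod_{i=1}^{3}\cos\theta_i = ab/[2(a+b)^2]>0$. Since at most one angle of a triangle is obtuse, an obtuse triangle has exactly one negative cosine, giving a negative product; thus positivity of the product forces all three cosines positive, so the triangle is acute. This ensures the feet of the altitudes form a non-degenerate orthic triangle with the standard orientation and justifies the sign conventions in the identities below.

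Next I would recall (or briefly re-derive from $R_h=R/2$) the formula
\[
r_h = 2R\,\cos\theta_1\cos\theta_2\cos\theta_3
\]
for the inradius of the orthic triangle of an acute triangle. A short derivation: the orthic triangle has sides $s_i\cos\theta_i$ (in the usual labeling), so its semiperimeter equals
\[
s_h=\tfrac{1}{2}\sum s_i\cos\theta_i = R\sum \sin\theta_i\cos\theta_i=\tfrac{R}{2}\sum\sin 2\theta_i = 2R\sin\theta_1\sin\theta_2\sin\theta_3,
\]
using $s_i=2R\sin\theta_i$ and the identity $\sum\sin 2\theta_i = 4\prod\sin\theta_i$ valid when $\theta_1+\theta_2+\theta_3=\pi$. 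The area of the orthic triangle is $2\Delta\prod\cos\theta_i$ with $\Delta=\frac{s_1s_2s_3}{4R}$, which after substitution becomes $4R^2\prod\sin\theta_i\prod\cos\theta_i$. Dividing area by semiperimeter yields the claimed formula.

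Finally I would substitute the two Family II invariants: $R=a+b$ (from Cayley's $N=3$ condition noted at the top of Section~\ref{sec:II}) and $\prod\cos\theta_i=ab/[2(a+b)^2]$ from Lemma~\ref{lem:cos-prod-II}. This gives
\[
r_h = 2(a+b)\cdot\frac{ab}{2(a+b)^2} = \frac{ab}{a+b},
\]
which is manifestly invariant across the family. The only real obstacle is establishing the orthic-inradius identity cleanly with the correct sign; the acuteness step above eliminates that concern, after which the proof is a one-line substitution.
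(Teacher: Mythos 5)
Your proof is correct and follows essentially the same route as the paper: both rest on the identity $r_h = 2R\prod_{i=1}^{3}\cos\theta_i$ combined with Lemma~\ref{lem:cos-prod-II} and the invariance of $R=a+b$. The only differences are cosmetic — you re-derive the orthic-inradius identity rather than citing it, and you establish acuteness from the positivity of the cosine product instead of the paper's geometric argument (Lemma~\ref{lem:acute}); both are valid.
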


\begin{proof}
$r_h=2 R \prod_{i=1}^3{\cos\theta_i}$ \cite[Orthic Triangle, Eqn. 5]{mw}. Referring to Lemma~\ref{lem:cos-prod-II} completes the proof.
\end{proof}

\begin{figure}
    \centering
    \includegraphics[width=\textwidth]{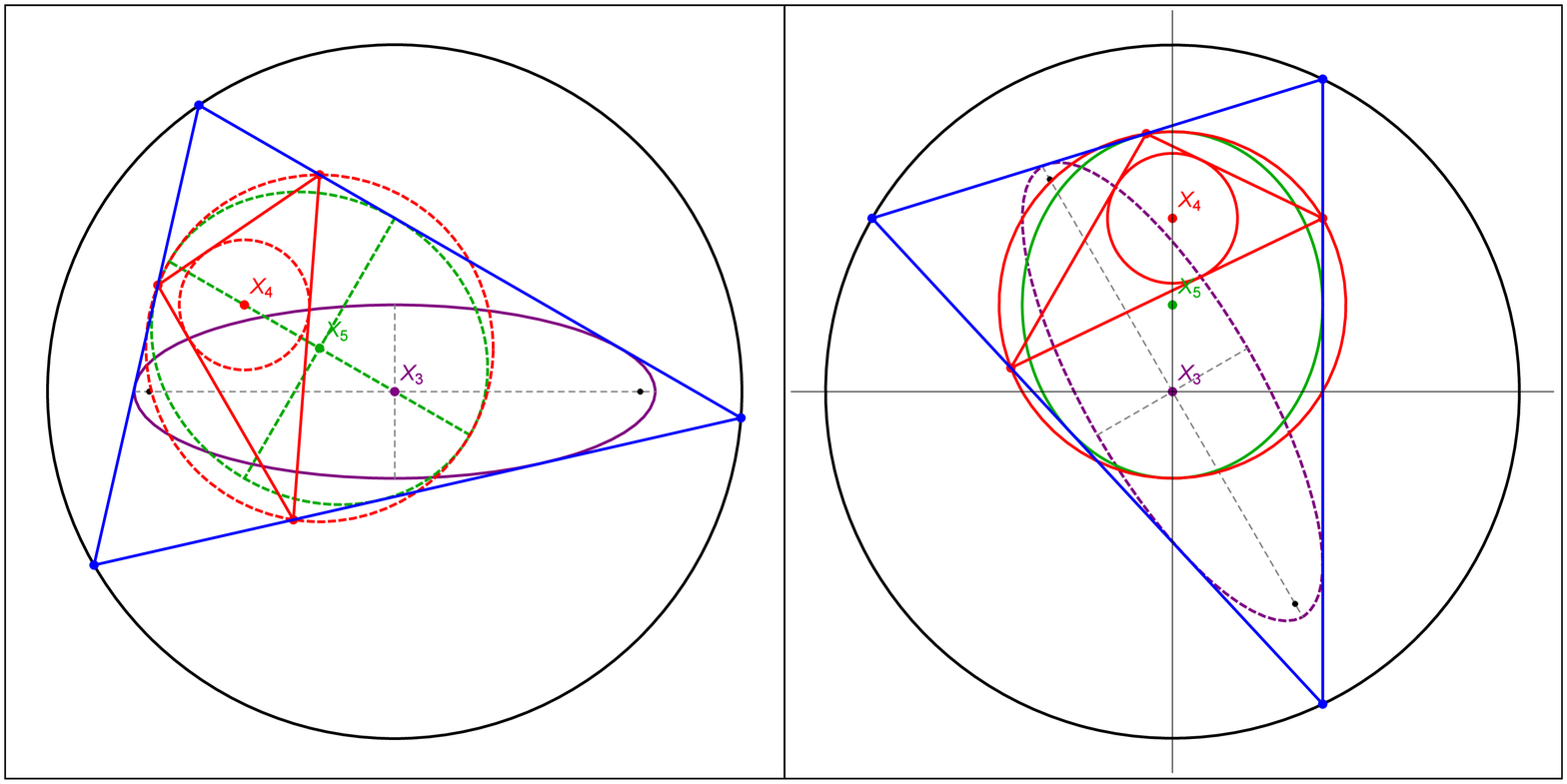}
    \caption{\textbf{Left:} Family II 3-periodics (blue), and their orthic triangle (red). The latter's inradius and circumradius are invariant. The orthic triangle's incircle and circumcircle (both dashed red) are centered on the 3-periodic's orthocenter $X_4$ and the nine-point center $X_5$, respectively. Also shown is the rigidly-rotating MacBeath inellipse (dashed green), centered on $X_5$ with foci at $X_3$ and $X_4$. \textbf{Right:} Family II orthic triangles are identical (up to a variable rotation), to the poristic triangles (red) \cite{odehnal2011-poristic}. Equivalently, the original family is that of poristic excentral triangles (blue), for which both incircle and circumcircle (solid red) are stationary. Also stationary is the excentral MacBeath inellipse (green), i.e., it is the caustic \cite{garcia2020-poristic}, with center $X_5$ and foci $X_3$, and $X_4$, respectively. The original outer circle (black on both images) is also stationary on the poristic case, however the inner ellipse in the Poncelet pair (purple) becomes a rigidly-rotating $X_3$-centered excentral inellipse (dashed purple), whose axes are $R+d'$ and $R-d'$.
    \href{https://youtu.be/wUu2iMesv3U}{Video 1}, \href{https://youtu.be/xM1SAZO9bDc}{Video 2}}
    \label{fig:II-poristic}
\end{figure}

Let $(\mathcal{E}_{II},\mathcal{E}''_{II})$ denote the confocal pair which is an affine image of a circle-inellipse concentric pair. Let $\alpha,\beta$ and $\alpha'',\beta''$ denote the semi-axes of  $\mathcal{E}_{II}$, and $\mathcal{E}''_{II}$, respectively.

%Explicitly $T(x,y)=(x,\frac{a}{b}y)$
\begin{theorem}
The invariant product of cosines for family II triangles is identical to the one obtained from excentral triangles of 3-periodics in $(\mathcal{E}_{II},\mathcal{E''}_{II})$.
\label{thm:famII-confocal-cos-prod}
\end{theorem}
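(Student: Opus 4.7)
The plan is to reduce both sides of the claimed equality to the same closed-form expression in the confocal semi-axes $(\alpha,\beta)$. First I would recall the standard triangle identity $\prod_{i=1}^{3} \sin(\theta_i/2) = r/(4R)$; since the excentral triangle of a triangle with angles $\theta_i$ has angles $\pi/2 - \theta_i/2$, this gives
\[
\prod_{i=1}^{3}\cos\theta_i^{\text{exc}} \;=\; \prod_{i=1}^{3}\sin\frac{\theta_i}{2} \;=\; \frac{r}{4R}.
\]
Applied to the confocal family $(\mathcal{E}_{II},\mathcal{E}''_{II})$, the excentral cosine product equals $r/(4R)$ of the 3-periodics themselves. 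By \eqref{eqn:rOvR} this is an invariant of the confocal family, equal to $(\delta-\beta^2)(\alpha^2-\delta)/[2(\alpha^2-\beta^2)^2]$ with $\delta=\sqrt{\alpha^4-\alpha^2\beta^2+\beta^4}$, so invariance of the excentral cosine product comes for free.

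Next I would specify the affine map that links the two pairs. The diagonal map $T(x,y) = (x,\, (\alpha/\beta)\,y)$ sends $\mathcal{E}_{II}$ to a concentric circle of radius $R=\alpha$ and $\mathcal{E}''_{II}$ to the concentric inner ellipse of Family II with semi-axes $a=\alpha''$ and $b=\alpha\beta''/\beta$. The Cayley--Poncelet condition \eqref{eqn:pair-n3} for 3-periodics in the Family II pair, namely $a/R+b/R=1$, then pulls back to $\alpha''/\alpha+\beta''/\beta=1$, which holds automatically from the formulas for $\alpha'',\beta''$ quoted in the proof of Theorem~\ref{thm:famI-confocal-cos}. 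This confirms that $T$ is the correct affine bridge and fixes the dictionary between the two parameterizations.

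It then remains to verify the algebraic match. Using $\alpha''=\alpha(\delta-\beta^2)/(\alpha^2-\beta^2)$ and $\beta''=\beta(\alpha^2-\delta)/(\alpha^2-\beta^2)$, one checks $(a+b)^2 = \alpha^2$ and $ab = \alpha\alpha''\beta''/\beta$, so Lemma~\ref{lem:cos-prod-II} yields
\[
\frac{ab}{2(a+b)^2} \;=\; \frac{\alpha''\beta''}{2\alpha\beta} \;=\; \frac{(\delta-\beta^2)(\alpha^2-\delta)}{2(\alpha^2-\beta^2)^2} \;=\; \frac{r}{4R},
\]
matching the excentral cosine product computed above. There is no serious obstacle here; the only subtle step is pinning down the correct affine map and verifying its compatibility with \eqref{eqn:pair-n3}, after which the identification collapses to a one-line algebraic cancellation using the closed forms of $\alpha''$ and $\beta''$.
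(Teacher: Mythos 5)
Your proof is correct and follows essentially the same route as the paper's: substitute $a=\alpha''$, $b=(\alpha/\beta)\beta''$ into the family II cosine product $ab/(2(a+b)^2)$ from Lemma~\ref{lem:cos-prod-II} and match it against the excentral identity $\prod|\cos\theta_i'|=r/(4R)$ with $r/R$ as in \eqref{eqn:rOvR}. The only additions are welcome bookkeeping the paper leaves implicit, namely writing out the affine map and checking the closure condition $\alpha''/\alpha+\beta''/\beta=1$.
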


\begin{proof}
Excentrals in the confocal pair conserve the product of cosines \cite[Corollary 2]{garcia2020-new-properties}. Recall that for any triangle:

\[ \prod_{i=1}^{3}{|\cos\theta_i'|}=\frac{r}{4R}\cdot \]

\noindent where $\theta_i'$ are the angles of the excentral triangle. 
%Plug constant expressions for $r_h=a b/(a+b)$ and $R_h=R/2=(a+b)/2$ into the above and verify it yields Lemma~\ref{lem:cos-prod-II}.
Plugging $a=  {\alpha''}$ and $b=\frac{\alpha}{\beta}\beta''$  into \eqref{lem:cos-prod-II} yields four times the above identity when $r/R$ is computed as in \eqref{eqn:rOvR}, completing the proof.
\end{proof}

\begin{lemma}
Family II 3-periodics are always acute.
\label{lem:acute}
\end{lemma}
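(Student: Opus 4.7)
The plan is to deduce acuteness purely from the sign of the cosine product established in Lemma~\ref{lem:cos-prod-II}, together with elementary angle-sum considerations, bypassing any direct angle computation.

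First I would invoke Lemma~\ref{lem:cos-prod-II} to recall that
\[
\prod_{i=1}^{3}\cos\theta_i \;=\; \frac{ab}{2(a+b)^{2}},
\]
which is strictly positive since $a,b>0$. This immediately rules out any right angle in the family (a right angle would force the product to vanish), so it suffices to show no angle is obtuse.

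Next I would argue by cases on how many of the three cosines can be negative. Since $\theta_1+\theta_2+\theta_3=\pi$, at most one $\theta_i$ can exceed $\pi/2$; hence at most one cosine can be negative. Combined with the strict positivity of the product above, the number of negative cosines must be even, so it must be zero. Therefore $\cos\theta_i>0$ for all $i$, meaning every angle is strictly less than $\pi/2$, i.e.\ the 3-periodic is acute.

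The only subtlety worth flagging is continuity: the family is connected (parametrized by a single angle on the outer circle), and the preceding argument rules out right triangles uniformly, so no degenerate configuration arises along the family. No further obstacle is expected, since all of the work has already been done inside Lemma~\ref{lem:cos-prod-II}; the present statement is essentially a one-line corollary of it.
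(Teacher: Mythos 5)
Your argument is correct, but it takes a genuinely different route from the paper. You deduce acuteness algebraically from the sign of the invariant in Lemma~\ref{lem:cos-prod-II}: since $\prod_i \cos\theta_i = ab/(2(a+b)^2)>0$, no cosine vanishes (no right angle), and a positive product of three reals has an even number of negative factors; as a triangle can have at most one obtuse angle, that number is zero, so all angles are acute. This is logically sound and non-circular, because Lemma~\ref{lem:cos-prod-II} is established beforehand directly from the vertex parametrization. The paper instead argues synthetically: the circumcenter $X_3$ coincides with the common center $O$, which lies inside the caustic, hence inside every triangle of the family (each triangle circumscribes the caustic); and a triangle is acute if and only if its circumcenter is interior. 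The paper's proof is self-contained and does not lean on the CAS-verified product formula, which makes it the more robust of the two; your proof has the virtue of extracting acuteness as a genuine corollary of an invariant already in hand, and the case analysis on signs is a clean, reusable device. The closing remark about continuity in your write-up is unnecessary: the sign argument applies pointwise to each member of the family and needs no connectedness.
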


\begin{proof}
Since $X_3$ is the common center and is internal to the caustic, it will be interior to Family II 3-periodics, i.e., the latter are acute.
\end{proof}

Let $\mathcal{I}'$ be a (moving) reference frame centered on $X_3$ with one axis oriented toward $X_5$ (or $X_4$ as these 3 are collinear). Referring to Figure~\ref{fig:ellipse-circle-poristic} (right):

\begin{theorem}
With respect to $\mathcal{I}'$, family II 3-periodics are the excentral triangles to the poristic family (modulo a rigid rotation about $X_3$). Equivalently, family II orthics are identical (up to said variable rotation) to the poristic triangles.
\end{theorem}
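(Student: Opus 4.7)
The plan is to establish the second (equivalent) formulation first — that family II orthic triangles coincide, modulo a rigid rotation about $X_3$, with the poristic family — and then recover the excentral statement using Lemma \ref{lem:acute} together with the classical fact that an acute triangle is the excentral of its own orthic triangle.

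For the orthic identification, the key observation is that in the moving frame $\mathcal{I}'$ centered at $X_3$ with one axis along the Euler line $X_3 X_5 X_4$, both $X_4$ and $X_5$ become \emph{stationary}: Proposition \ref{prop:II-loci} places them on a common ray from $X_3$ at fixed distances $2d'$ and $d'$. Now the circumcircle of the orthic triangle is, for any triangle, the nine-point circle of the original, which is centered on $X_5$ with radius $R_h = R/2 = (a+b)/2$; this circle is therefore stationary in $\mathcal{I}'$. Because family II 3-periodics are acute (Lemma \ref{lem:acute}), the incenter of the orthic triangle coincides with the orthocenter $X_4$ of the original, so by Proposition \ref{prop:II-orthic-radii} the orthic's incircle is the stationary disk of radius $r_h = ab/(a+b)$ centered at $X_4$.

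Thus in $\mathcal{I}'$ the orthic triangles form a Poncelet one-parameter family inscribed in a fixed circumcircle and circumscribed about a fixed incircle, with $R_h = (a+b)/2$, $r_h = ab/(a+b)$ and center-to-center distance $d' = (a-b)/2$. A quick algebraic check confirms Euler's relation $d'^2 = R_h(R_h - 2 r_h) = (a-b)^2/4$, so the pair of fixed circles is genuinely a Chapple/poristic pair. By Poncelet's closure theorem this pair carries a \emph{unique} one-parameter closed family of interscribed triangles — the poristic family — and continuity in the parameter identifies the orthic family with it, up to the rigid rotation that defines $\mathcal{I}'$.

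Finally, to recover the excentral statement: by Lemma \ref{lem:acute} each family II 3-periodic is acute, and for any acute triangle the three vertices are precisely the excenters of its orthic triangle. Applied pointwise along the family, this promotes the orthic identification to the desired statement that family II 3-periodics are the excentrals of the poristic triangles, modulo rotation about $X_3$. The only real subtlety is bookkeeping the rotation: one must confirm that the classical identifications ``orthocenter of original $=$ incenter of orthic'' and ``vertices of original $=$ excenters of orthic'' are pointwise geometric identities, hence they survive intact under the passage between the laboratory frame and the moving frame $\mathcal{I}'$, and no genuine analytic obstacle arises beyond the already-proved Propositions \ref{prop:II-loci} and \ref{prop:II-orthic-radii}.
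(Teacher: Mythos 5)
Your argument is correct and follows essentially the same route as the paper's: both identify the orthic's circumcenter and incenter with $X_5$ and $X_4$ of the reference triangle (using acuteness, Lemma~\ref{lem:acute}), invoke the invariance of $d'$, $r_h$, and $R_h$ (Propositions~\ref{prop:II-loci} and~\ref{prop:II-orthic-radii}) to conclude the orthics are interscribed in a fixed circle pair in the frame $\mathcal{I}'$, and then pass to the excentral formulation. Your version is somewhat more complete, since you explicitly verify Euler's relation $d'^2=R_h(R_h-2r_h)$ and appeal to Poncelet closure to identify the family, steps the paper leaves implicit.
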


\begin{proof}
$X_5$ of a reference triangle is $X_3$ of the orthic triangle \cite{etc}. Since the family is always acute (Lemma~\ref{lem:acute}), $X_4$ of the reference is $X_1$ of the orthic triangle \cite{coxeter67}. By  Proposition~\ref{prop:II-loci}, $d'=|X_5-X_3|$ is invariant, i.e., the distance between $X_1$ and $X_3$ of the orthic triangle is invariant. The claim follows from noting $X_3,X_5,X_4$ are collinear \cite{mw} and that the orthic inradius and
circumradius are invariant, Proposition~\ref{prop:II-orthic-radii}. 
\end{proof}

Recall from \cite[Thm.2]{garcia2020-poristic}:

\begin{obs}
The $X_3$-centered inconic to the poristic excentral triangles is a rigidly-rotating ellipse with axes $R+d'$ and $R-d'$.
%\textcolor{red}{ Seria 2a e 2b ($R=a+b. d'=(a-b)/2$) 
%ou $(3a+b)/2 e (a+3b)/2$}
\end{obs}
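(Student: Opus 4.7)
The plan is to derive the observation by pulling back the fixed inner caustic of Family II through the variable rotation established in the preceding theorem. Since Family II 3-periodics are the poristic excentrals up to a rigid rotation about $X_3$, the Family II inner ellipse (a fixed conic inscribed in every Family II 3-periodic) becomes a rigidly-rotating conic inscribed in every poristic excentral, and uniqueness forces it to coincide with the $X_3$-centered inconic of the statement.

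Concretely, let $\rho_\psi$ denote the variable rigid rotation about $X_3$ which, by the preceding theorem, carries each Family II 3-periodic (at parameter $\psi$) onto the corresponding poristic excentral. The Poncelet caustic of Family II is a fixed ellipse $\mathcal{C}$ centered at $X_3$ with semi-axes $a, b$, tangent to all three sides of every Family II 3-periodic. Setting $E_\psi := \rho_\psi(\mathcal{C})$ yields an ellipse which (i) is centered at $X_3$ (since $\rho_\psi$ fixes $X_3$), (ii) has the same semi-axes $a, b$ (since rotations are isometries), and (iii) is tangent to each side of the corresponding poristic excentral (since rotations preserve tangency). Thus $\{E_\psi\}$ is a rigidly-rotating family of ellipses, each inscribed in the corresponding poristic excentral and centered at $X_3$.

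The classical uniqueness of an inscribed ellipse with prescribed center then identifies $E_\psi$ as the $X_3$-centered inconic of the excentral. It remains to identify $a, b$ with $R \pm d'$ in the poristic notation: Proposition~\ref{prop:II-orthic-radii} gives the poristic inradius $r = ab/(a+b)$, and the poristic circumradius equals half of the Family II circumradius, namely $R = (a+b)/2$. Euler's relation $d^2 = R(R-2r)$ then yields $d' = (a-b)/2$, so $R + d' = a$ and $R - d' = b$, matching the semi-axes of $E_\psi$. The main delicate step is reconciling the two meanings of ``$R$'' (Family II circumradius $a+b$ versus poristic circumradius $(a+b)/2$); once this bookkeeping is settled, the rotation argument combined with uniqueness delivers the conclusion with no further computation.
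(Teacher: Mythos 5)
Your proof is correct and is essentially the paper's own justification made precise: the paper does not prove this Observation but recalls it from \cite[Thm.2]{garcia2020-poristic} and merely remarks that it ``makes sense when one considers the rotating reference frame,'' which is exactly your argument of pushing the fixed Family II caustic (semi-axes $a,b$, centered at $X_3=O$) through the variable rotation and invoking uniqueness of the center-prescribed inconic. Your bookkeeping on the notational clash --- reading $R$ in the Observation as the poristic circumradius $(a+b)/2$ (the orthic circumradius $R_h$) rather than the Family II circumradius $a+b$, so that $R\pm d'$ yields $a$ and $b$ --- is the correct resolution and is consistent with the companion Observation on the MacBeath inconic.
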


Which makes sense when one considers the rotating reference frame. Also recall from \cite[Thm.1]{garcia2020-poristic} that:

\begin{obs}
The MacBeath Inconic to the excentrals is stationary with axes $R$ and $\sqrt{R^2-d'^2}$.     
%\textcolor{red}{ Seria  $\sqrt{(a+3b)(3a+b)}{2}$  ($R=a+b. d'=(a-b)/2$)}
\end{obs}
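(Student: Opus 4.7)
The plan is to derive the observation from three ingredients: the standard triangle-excentral correspondence, the focal description of the MacBeath inconic, and the known stationary centers of the poristic family.

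First, I would recall that for any triangle $T$ with excentral $T^*$, one has $X_3(T^*) = X_{40}(T)$ (the Bevan point, i.e.\ the reflection of $X_1$ in $X_3$), $X_4(T^*) = X_1(T)$, $X_5(T^*) = X_3(T)$, and $R(T^*) = 2R(T)$. Next, the MacBeath inconic of an arbitrary triangle has foci at $X_3$ and $X_4$, is centered at $X_5$, and has semi-major axis equal to half the circumradius. A clean justification of this last fact, which is the only non-bookkeeping step, is a short vector argument: placing $X_3$ at the origin gives $X_4 = A + B + C$ while the reflection of $X_3$ across side $BC$ is $B+C$; the inconic reflection property identifies $2a_m$ with the distance from $X_4$ to this reflected focus, namely $|A| = R$, so $a_m = R/2$.

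Applying this to $T^*$ of a poristic triangle $T$, the MacBeath inconic has foci $X_1(T)$ and $X_{40}(T)$, center $X_3(T)$, and semi-major axis $R(T^*)/2 = R(T)=:R$. Since the poristic family keeps $X_1$ and $X_3$ fixed, the Bevan point $X_{40} = 2X_3 - X_1$ and the distance $d = |X_1 - X_3|$ are also invariant. Hence the MacBeath inconic has stationary foci, stationary center, and constant semi-major axis $R$. The semi-minor axis follows from $b_m^2 = a_m^2 - c^2$ with $2c = |X_{40} - X_1| = 2d$, giving $b_m = \sqrt{R^2-d^2}$. Identifying $d$ with the Family~II quantity $d' = |X_5-X_3|$ (which coincides with the inter-center distance of the corresponding poristic family, as computed for Family~II) yields the claimed axes $R$ and $\sqrt{R^2-d'^2}$.

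The main potential obstacle is the identity $a_m = R/2$; as just indicated it collapses to a three-line vector computation, so the rest is bookkeeping once the excentral correspondences and the poristic invariance of $X_1,X_3$ are in hand. Alternatively the whole statement may be imported verbatim from \cite{garcia2020-poristic}, to which the observation is already credited.
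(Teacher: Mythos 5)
Your argument is correct, but note that the paper offers no proof of this observation at all: it is recalled verbatim from \cite[Thm.1]{garcia2020-poristic}, which is exactly the fallback you mention in your last sentence. What you supply is therefore a self-contained derivation where the paper only cites, and it holds up. The excentral correspondences $X_3(T^*)=X_{40}(T)$, $X_4(T^*)=X_1(T)$, $X_5(T^*)=X_3(T)$, $R(T^*)=2R(T)$ are classical (the reference triangle is the orthic triangle of its excentral triangle, so the circumcircle of $T$ is the nine-point circle of $T^*$); your vector computation $|X_4-(B+C)|=|A|=R$ correctly pins the MacBeath semi-major axis at $R/2$ via the tangent-line reflection property; and the stationarity of $X_1$ and $X_3$ over the poristic family (hence of $X_{40}=2X_3-X_1$) makes the foci, center and semi-major axis all constant, which determines the conic and forces it to be stationary. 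Your identification $d=d'$ is consistent with the paper's Proposition~\ref{prop:II-loci} ($|X_4-X_5|=d'$ for Family II) together with Lemma~\ref{lem:acute}, since for acute triangles the orthic's incenter and circumcenter are the reference's $X_4$ and $X_5$. One bookkeeping caveat worth making explicit: the $R$ in the observation is the \emph{poristic} circumradius, i.e.\ half the Family II circumradius $a+b$, which is exactly what your $R(T^*)/2$ delivers; the paper uses $R$ for the Family II circumradius elsewhere in the same section, so the two should not be conflated.
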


Therefore its focal length is simply $2d'=|X_4-X_3|$. Furthermore, because poristic triangles are the image of billiard 3-periodics under a (varying) affine transform \cite[Thm.4]{garcia2020-poristic}, Family II 3-periodics will share all scale-free invariants with billiard excentrals, such as product of cosines, ratio of area to its orthic triangle, etc., see \cite{reznik2020-invariants}.

\section{Family III: Homothetic}
\label{sec:III}
This family is inscribed in an ellipse centered on $O$ with semi-axes $(a,b)$ and circumscribes an homothetic, axis-aligned, concentric ellipse with semi-axes $(a'',b'')$; see Figure~\ref{fig:brocard-n3}. An explicit parametrization is provided in Appendix~\ref{app:explicit-III}.

\begin{proposition}
For family III 3-periodics, $a''=a/2$ and $b''=b/2$, the barycenter $X_2$ is stationary at $O$ and the area $A$ is invariant and given by:
\[A= \frac{3\sqrt{3}}{4} a b \cdot\]
\end{proposition}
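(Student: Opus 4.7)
The plan is to reduce everything to the concentric-circles case via an affine rescaling. First, I would derive the relation $a''=a/2,\;b''=b/2$ directly from Proposition~1 (the Cayley condition \eqref{eqn:pair-n3}): the homothetic hypothesis means $a''/a=b''/b$, so setting this common ratio equal to $\lambda$, equation \eqref{eqn:pair-n3} gives $2\lambda=1$, hence $\lambda=1/2$.

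Next I would apply the affine map $T(x,y)=(x/a,\,y/b)$. This sends the outer ellipse to the unit circle and, by the previous step, sends the inner ellipse to the concentric circle of radius $1/2$. Because affine maps carry Poncelet pairs to Poncelet pairs and preserve tangency and incidence, the 3-periodic family of the homothetic pair is carried bijectively to the 3-periodic family of this concentric circle pair. In the latter, every chord of the unit circle tangent to the inner circle of radius $1/2$ has the same length $\sqrt{3}$, so the Poncelet 3-periodics are exactly the equilateral triangles inscribed in the unit circle. Their centroid is the common center, and their area equals $\tfrac{3\sqrt 3}{4}$, a single fixed value.

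From here the conclusions follow at once. Affine maps preserve centroids (being the average of vertex coordinates), so the barycenter $X_2$ of any family III 3-periodic is the $T^{-1}$-image of the origin, which is $O$, and it is stationary. For the area, $T$ has Jacobian determinant $1/(ab)$, so $T^{-1}$ multiplies areas by $ab$; therefore every orbit in family III has area $ab\cdot\tfrac{3\sqrt 3}{4}=\tfrac{3\sqrt 3}{4}\,ab$, proving both invariance and the explicit value.

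There is no substantial obstacle here; the only thing to be slightly careful about is the justification that a Poncelet family maps to a Poncelet family under an affine transformation (which is standard because affine maps preserve lines, conics, and the tangency relation between them). One can alternatively read off all three assertions from the explicit parametrization in Appendix~\ref{app:explicit-III} by a short CAS computation, but the affine reduction above makes the result conceptually transparent and avoids any calculation.
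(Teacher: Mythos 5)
Your proposal is correct and follows essentially the same route as the paper: reduce to equilateral triangles interscribed in concentric circles via the affine map, then use affine invariance of the centroid and the Jacobian $ab$ to scale the equilateral area $\tfrac{3\sqrt{3}}{4}$. Your version merely spells out details the paper leaves implicit (the derivation of $\lambda=1/2$ from the closure condition and the chord-length computation showing the circle-pair orbits are equilateral).
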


\begin{proof}
family III is the affine image of a family of equilateral triangles interscribed within two concentric circles. The inradius of such a family is half its circumradius. Amongst triangle centers, the barycenter $X_2$ is uniquely invariant under affine transformations; it lies at the origin for an equilateral. Affine transformations preserve area ratios. $A$ is the area of an equilateral triangle inscribed in a unit circle scaled by the Jacobian $a b$. This completes the proof.
\end{proof}

\begin{figure}
    \centering
\includegraphics[width=\textwidth]{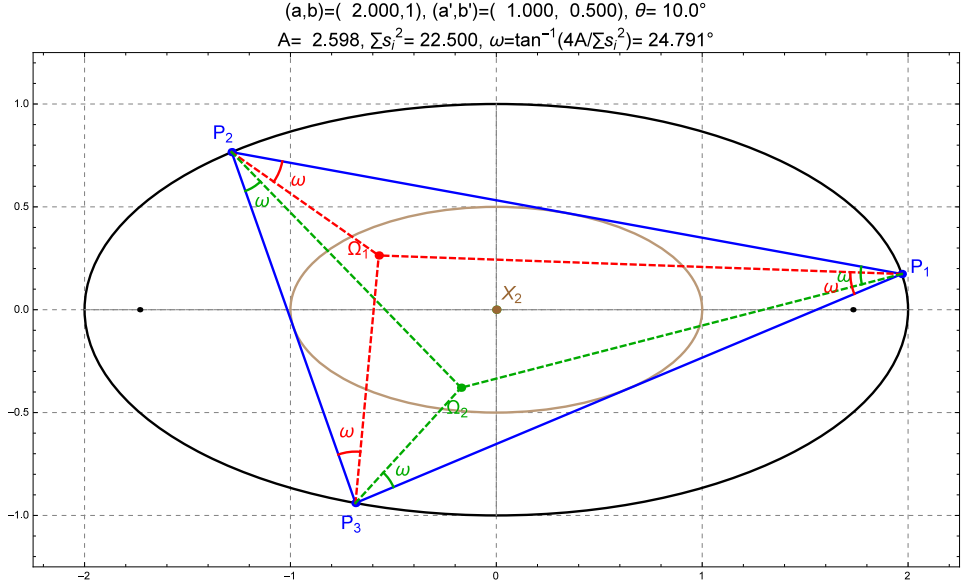}
    \caption{Family III (homothetic pair) 3-periodics (blue). Also shown are the Brocard points $\Omega_1$ and $\Omega_2$. Since both area and sum of squared sidelengths are constant, so is the Brocard angle $\omega$. \href{https://youtu.be/2fvGd8wioZY}{Video}}
    \label{fig:brocard-n3}
\end{figure}

A known result is that the cotangent of the Brocard angle $\cot\omega$ of a triangle is equal to the sum of the cotangents of its three internal angles \cite[Brocard Angle, Eqn. 1]{mw}. Surprisingly, we have:

\begin{proposition}
Family III 3-periodics have invariant $\omega$ given by:
\label{prop:somacotan}

\[ 	\cot\omega=\sum_{i=1}^3 \cot\theta_i= \frac{\sqrt{3}}{2} \frac{{a}^{2}+{b}^{2}}{a b}\cdot
\]	
\end{proposition}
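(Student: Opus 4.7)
My plan is to combine two standard identities for the Brocard angle with the invariant area $A = \tfrac{3\sqrt{3}}{4}ab$ established in the preceding proposition. The statement's first equality, $\cot\omega = \sum_i \cot\theta_i$, is the classical Brocard identity cited immediately before the proposition, so the real task is to pin down the numerical value. For that I would instead use the equivalent identity
$$\cot\omega = \frac{s_1^2 + s_2^2 + s_3^2}{4A},$$
which reduces the problem to showing that $L_2 := \sum_i s_i^2$ is invariant over Family III and computing it.

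To handle $L_2$ I would use the affine parametrization already invoked in the previous proof: Family III is the image, under $T(x,y) = (ax,by)$, of the equilateral 3-periodics inscribed in the unit circle (with incircle of radius $1/2$, matching $a''=a/2$, $b''=b/2$). So a generic 3-periodic has vertices $P_i = (a\cos\alpha_i,\, b\sin\alpha_i)$ with $\alpha_i = \theta + 2\pi i/3$ and one free parameter $\theta$. Expanding
$$s_i^2 = a^2(\cos\alpha_{i+1}-\cos\alpha_i)^2 + b^2(\sin\alpha_{i+1}-\sin\alpha_i)^2$$
and summing over $i$, the $\theta$-dependent pieces $\sum_i \cos 2\alpha_i$ and $\sum_i \sin 2\alpha_i$ both vanish (three angles evenly spaced $4\pi/3$ apart), while $\sum_i \cos(\alpha_{i+1}-\alpha_i) = 3\cos(2\pi/3) = -3/2$. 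The net result is $\sum_i(\cos\alpha_{i+1}-\cos\alpha_i)^2 = \sum_i(\sin\alpha_{i+1}-\sin\alpha_i)^2 = 9/2$, yielding the invariant $L_2 = \tfrac{9}{2}(a^2+b^2)$.

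Substituting into the Brocard identity then gives
$$\cot\omega = \frac{L_2}{4A} = \frac{(9/2)(a^2+b^2)}{3\sqrt{3}\,ab} = \frac{\sqrt{3}}{2}\,\frac{a^2+b^2}{ab},$$
as claimed. There is no substantive obstacle here: both ingredients reduce to trigonometric sums over equally-spaced angles. The only care point is justifying that every Family III 3-periodic really is captured by the affine parametrization above, which follows directly from the preceding proposition ($X_2$ fixed at $O$ together with the homothety ratio $1/2$ forces the preimage pair to be two concentric circles in $2{:}1$ ratio, whose Poncelet triangles are precisely the equilaterals). Alternatively, one can skip the parametrization and follow the CAS-assisted route described in the paper's proof-method subsection, starting from the explicit vertices in Appendix~\ref{app:explicit-III} and simplifying against the closed form above.
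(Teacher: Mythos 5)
Your proof is correct, but it inverts the logical order the paper uses. The paper proves this proposition by a direct calculation of $\sum_i\cot\theta_i$ from the explicit vertices in Appendix~\ref{app:explicit-III} (i.e., by computing with the vertex angles themselves, CAS-assisted), and only afterwards deduces the invariance of $L_2=\sum_i s_i^2$ as a corollary via the identity $\cot\omega=L_2/(4A)$. You instead establish $L_2=\tfrac{9}{2}(a^2+b^2)$ first --- via the clean observation that the parametrization of Appendix~\ref{app:explicit-III} is exactly $P_i=(a\cos\alpha_i,b\sin\alpha_i)$ with the $\alpha_i$ equally spaced by $2\pi/3$, so the $\theta$-dependent harmonics $\sum_i\cos 2\alpha_i$ and $\sum_i\sin 2\alpha_i$ cancel --- and then read off $\cot\omega$ from the same identity together with the invariant area from the preceding proposition. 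Your arithmetic checks out: $\sum_i(\cos\alpha_{i+1}-\cos\alpha_i)^2=3+\tfrac{3}{2}=\tfrac{9}{2}$, and
\[
\frac{L_2}{4A}=\frac{\tfrac{9}{2}(a^2+b^2)}{3\sqrt{3}\,ab}=\frac{\sqrt{3}}{2}\,\frac{a^2+b^2}{ab}.
\]
What your route buys is that the only nontrivial computation is a sum of squared chord lengths, which is far more tractable by hand than the cotangents of the vertex angles; the cost is that the paper's subsequent Corollary on $L_2$ becomes a lemma you prove en route rather than a consequence. There is no circularity in either direction, since the area proposition and the classical identities $\cot\omega=\sum_i\cot\theta_i=L_2/(4A)$ are all independent of the statement being proved.
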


\begin{proof} Direct calculations using the explicit parametrization of vertices in Appendix \ref{app:explicit-III}.
\end{proof}

\noindent A known relation is $\cot\omega=(\sum_{i=1}^3 s_i^2)/(4 A)$ \cite[Brocard Angle, Eqn. 2]{mw}. Therefore, we have:

\begin{corollary}
\label{prop:somaL2}
The sum of squared sidelengths $s_i^2$ is invariant and given by:
	
\[ \sum_{i=1}^3 s_i^2=\frac{9}{2} \left({a}^{2}+{b}^{2}\right)\cdot
\]
\end{corollary}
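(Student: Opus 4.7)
The plan is to derive this corollary as an immediate algebraic consequence of the two preceding results combined with the classical triangle identity
\[ \cot\omega = \frac{\sum_{i=1}^{3} s_i^2}{4A}, \]
which is stated in the preamble to the corollary and referenced to \cite[Brocard Angle, Eqn. 2]{mw}. Since both $\cot\omega$ and the area $A$ have just been shown to be invariant across Family III, this identity forces $\sum s_i^2$ to be invariant as well, and pins down its value.

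Concretely, first I would solve the identity for $\sum s_i^2$ to get $\sum s_i^2 = 4A\cot\omega$. Next I would substitute the expression $A = \frac{3\sqrt{3}}{4}ab$ from the preceding proposition and the expression $\cot\omega = \frac{\sqrt{3}}{2}\frac{a^2+b^2}{ab}$ from Proposition~\ref{prop:somacotan}. The two $ab$ factors cancel and the $\sqrt{3}\cdot\sqrt{3}=3$ collapse produces the clean closed form $\frac{9}{2}(a^2+b^2)$.

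There is no genuine obstacle here: the only thing to verify is that the classical formula relating $\cot\omega$ to $\sum s_i^2$ and $A$ is being applied correctly, and that the two cited invariants are indeed invariant for every triangle in the family (not only for the symmetric configuration used to derive them), which is already asserted in the corresponding proposition statements. Hence the proof reduces to a one-line computation, and the corollary follows.
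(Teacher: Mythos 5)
Your proposal is correct and is exactly the paper's argument: the corollary is stated immediately after the identity $\cot\omega=\left(\sum_{i=1}^3 s_i^2\right)/(4A)$ precisely so that it follows by combining the invariant area $A=\tfrac{3\sqrt{3}}{4}ab$ with the invariant $\cot\omega=\tfrac{\sqrt{3}}{2}\tfrac{a^2+b^2}{ab}$ of Proposition~\ref{prop:somacotan}. The one-line computation $4A\cot\omega=\tfrac{9}{2}(a^2+b^2)$ checks out.
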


%\begin{corollary}
%\[ 4\left(1-\cos\left(\frac{2\pi}{n}\right) \right)A_n Cot_n+ n\cos\left(\frac{2\pi}{n}\right) L_n=0\]
%\end{corollary}

As mentioned above, in the confocal pair the loci of $X_1$ (resp. $X_6$) is an ellipse (resp. a quartic) \cite{garcia2020-ellipses}; see Appendix~\ref{app:loci-x1x6}. Interestingly, we have:

\begin{proposition}
For family III, the locus of the incenter $X_1$ (resp. symmedian point $X_6)$ is a quartic (resp. an ellipse). These are given by:

\begin{align*}
\text{locus of $X_1$}: \;& 
16\, \left( {a}^{2}{y}^{2}+{b}^{2}{x}^{2} \right)  \left( {a}^{2}{x}^{
2}+{b}^{2}{y}^{2} \right) -8\,{b}^{2} \left( {a}^{4}+5\,{a}^{2}{b}^{2}
+2\,{b}^{4} \right) {x}^{2}\\
&-8\,{a}^{2} \left( 2\,{a}^{4}+5\,{a}^{2}{b}
^{2}+{b}^{4} \right) {y}^{2}+{a}^{2}{b}^{2} \left( a^2-b^2 \right) ^{2}=0,
 \\
\text{locus of $X_6$}: &\; \frac{x^2}{a_6^2}+\frac{y^2}{b_6^2}=1,\;a_6=\frac{a(a^2-b^2)}{2(a^2+b^2)},\;\;\; b_6=\frac{b(a^2-b^2)}{2(a^2+b^2)} \cdot\\
\label{thm:III-loci_X1}
\end{align*}
\end{proposition}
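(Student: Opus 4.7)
My plan is to verify both loci by direct calculation from the explicit parametrization in Appendix~\ref{app:explicit-III}. In that parametrization each 3-periodic has vertices of the form $P_k=(a\cos\phi_k,\,b\sin\phi_k)$ with $\phi_k=t+2\pi k/3$, so everything reduces to trigonometric identities in a single real parameter $t$.

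\textbf{Step 1 (sidelengths).} Using $\cos\alpha-\cos\beta=-2\sin\tfrac{\alpha+\beta}{2}\sin\tfrac{\alpha-\beta}{2}$ and its sine analogue, the squared sidelengths take the compact form
\[
s_i^2 \;=\; 3\bigl(a^2\sin^2(t+\alpha_i)+b^2\cos^2(t+\alpha_i)\bigr)
\;=\; \tfrac32(a^2+b^2)+\tfrac32(b^2-a^2)\cos(2t+2\alpha_i),
\]
with phases $\alpha_i$ differing by $2\pi/3$. This immediately recovers Corollary~\ref{prop:somaL2}, and shows that each individual $s_i^2$ is a single second harmonic in $t$ on top of a fixed DC term.

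\textbf{Step 2 (locus of $X_6$).} From the barycentric formula $X_6=(\sum s_i^2\, P_i)/\sum s_i^2$ and the constancy of the denominator, I expand the numerator as a trigonometric polynomial in $t$. The would-be third harmonics $\cos 3t$, $\sin 3t$ cancel thanks to the threefold symmetry of the phases $\phi_k$, and the DC terms cancel as well, so only the first harmonics $\cos t,\sin t$ survive. The curve traced by $X_6(t)$ is therefore a centered, axis-aligned ellipse; evaluating at $t=0$ and $t=\pi/2$ pins down the semi-axes $a_6,b_6$ to the announced values.

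\textbf{Step 3 (locus of $X_1$).} The incenter $X_1=(\sum s_i\, P_i)/\sum s_i$ involves the square roots $s_i=\sqrt{s_i^2}$, so its coordinates are not rational trigonometric polynomials in $t$. My strategy is to substitute $(x,y)=X_1(t)$ into the candidate quartic $Q(x,y)$ stated in the proposition and then, after clearing the denominator $(\sum s_i)^4$, reorganize everything in terms of the elementary symmetric functions $\sigma_1=\sum s_i$, $\sigma_2=\sum_{i<j}s_i s_j$, $\sigma_3=s_0 s_1 s_2$. The quantities $\sigma_1^2$, $\sigma_2$ (via $\sigma_1^2=\sum s_i^2+2\sigma_2$), $\sigma_3^2$ and $\sigma_1\sigma_3$ are rational in $\cos t,\sin t$, while odd-degree combinations are not; a single resultant step against the relation the $\sigma_i$ satisfy eliminates the remaining radicals, after which a CAS certifies $Q(X_1(t))\equiv 0$.

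\textbf{Main obstacle.} The only delicate step is Step 3: one must manage the square roots without ballooning the expression. A guiding observation is that the rationalized polynomial equation is also satisfied by the three excenters, which are obtained from sign flips of the $s_i$; this both explains why the locus is a quartic rather than a conic and justifies the parenthetical remark in the statement that $X_1$ sweeps the \emph{convex} component of the quartic while the other component carries the (possibly concave) excenter locus.
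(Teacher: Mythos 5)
Your overall strategy---direct computation from the parametrization of Appendix~\ref{app:explicit-III} followed by CAS-assisted simplification---is exactly what the paper does (its entire proof is the line ``CAS-assisted simplification''), and your reduction of the vertices to $P_k=(a\cos\phi_k,\,b\sin\phi_k)$ with phases spaced $2\pi/3$ apart is correct and is the right way to organize the computation. However, two of your intermediate claims need repair. In Step 2 the harmonic bookkeeping is backwards: writing $s_i^2=C+D\cos(2t+2\alpha_i)$ for the side opposite $P_i$ and $\phi_i=t+\beta_i$, one finds $2\alpha_i+\beta_i\equiv 0 \pmod{2\pi}$ for all three indices, so in $\sum_i s_i^2\,a\cos\phi_i$ it is the \emph{first} harmonics that cancel and the \emph{third} harmonics that survive, all in phase; the result is $X_6(t)=(-a_6\cos 3t,\,-b_6\sin 3t)$. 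The conclusion is unchanged---this still traces the announced centered, axis-aligned ellipse (covered three times per period), and a spot check at $t=0$, $(a,b)=(2,1)$ gives $X_6=(-3/5,0)=(-a_6,0)$---but the mechanism you state is false and would mislead anyone reproducing the computation.

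Step 3 has a more substantive gap. The polynomial $\sigma_1^4\,Q(X_1(t))$ is \emph{not} symmetric in $s_1,s_2,s_3$ alone, since each $s_i$ multiplies a different vertex $P_i$; it is only invariant under simultaneous permutation of the pairs $(s_i,P_i)$, so it cannot be rewritten in the elementary symmetric functions as you propose. The workable version is to reduce it modulo the relations $s_i^2=f_i(t)$ to the multilinear form $A+Bs_1+Cs_2+Ds_3+\cdots+Hs_1s_2s_3$ with coefficients rational in $\cos t,\sin t$, and then show each coefficient vanishes (or exploit identities such as $s_1s_2s_3=4R\cdot\mathrm{Area}$ with both factors known functions of $t$). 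Moreover, the resultant step you describe only certifies that $Q$ vanishes for \emph{some} choice of signs of the radicals $s_i$, i.e., on the union of the incenter and the three excenter branches; to conclude $Q(X_1(t))\equiv 0$ on the all-positive branch specifically you need one more ingredient, e.g., a numerical evaluation at a single parameter value combined with continuity, which is how such CAS verifications are closed in practice. (Also, the parenthetical remark about the convex component that you cite appears in the analogous Family~II proposition, not in this one.) With these corrections your route coincides with the paper's; as written it does not yet constitute a proof.
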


\begin{proof}
CAS-assisted simplification.
\end{proof}

\subsection{Surprising Circular Loci}

The two isodynamic points $X_{13}$ and $X_{14}$ as well as the two isogonic points $X_{15}$ and $X_{16}$ have trilinear coordinates which are irrational on the sidelengths of a triangle \cite{etc}. In the elliptic billiard their loci are non-elliptic. Indeed, in the elliptic billiard we haven't yet found any triangle centers with a conic locus whose trilinears are irrational. Referring to Figure~\ref{fig:circ-loci-III}, for family III, this is a surprising fact:

\begin{proposition}
The loci of of $X_k$, $k=$13,14,15,16 are circles. Their radii are $(a-b)/2$, $(a+b)/2$, $(a-b)^2/z$, and $(a+b)^2/z$, respectively, where $z=2(a+b)$.
\end{proposition}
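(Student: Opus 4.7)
The plan is to follow the symbolic method described at the end of Section~\ref{sec:review}. Using the parametrization of Appendix~\ref{app:explicit-III}, I write the vertices of a Family III 3-periodic as $P_j(\alpha) = (a\cos(\alpha + 2\pi j/3),\, b\sin(\alpha + 2\pi j/3))$, $j = 0,1,2$, obtained by the affine map $(x,y)\mapsto(ax,by)$ applied to an equilateral triangle inscribed in the unit circle. The parametrization is invariant under $\alpha \mapsto \alpha + 2\pi/3$ (which only permutes the vertices) and, combined with the ellipse's axial reflections, under $\alpha \mapsto -\alpha$ as well. Consequently the locus of any triangle center is symmetric about both axes of the outer ellipse; if that locus is a circle, these symmetries force its center to coincide with $O$. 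It thus suffices to verify that $|X_k - O|^2$ is constant in $\alpha$ for $k \in \{13,14,15,16\}$ and to evaluate that constant.

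Next, I express each $X_k$ in Cartesian form. The isodynamic points $X_{15}, X_{16}$ have trilinears $\sin(\theta_i \pm \pi/3)$ and the isogonic points $X_{13}, X_{14}$ have trilinears $\csc(\theta_i \pm \pi/3)$. Expanding $\sin(\theta_i \pm \pi/3) = \tfrac{1}{2}\sin\theta_i \pm \tfrac{\sqrt{3}}{2}\cos\theta_i$ and applying the law of sines $\sin\theta_i = s_i/(2R)$ together with the law of cosines for $\cos\theta_i$, each trilinear becomes an algebraic expression in the sidelengths $s_i$, which are themselves polynomials in $(\sin\alpha,\cos\alpha,a,b)$ via the vertex formulas. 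Converting trilinears to barycentric (multiplying each by the opposite sidelength) and thence to Cartesian yields $X_k$ as a rational function of $(\sin\alpha,\cos\alpha,a,b)$ over $\mathbb{Q}(\sqrt{3})$.

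I then feed $|X_k|^2$ to a CAS and simplify. The symmetric sums that arise collapse via the standard identities for trigonometric sums at arguments differing by $2\pi/3$; for instance, $\sum_{j=0}^2 \cos(\alpha+2\pi j/3) = 0$, $\sum_{j=0}^2 \cos^2(\alpha+2\pi j/3) = \tfrac{3}{2}$, and $\sum_{j=0}^2 \sin(\alpha+2\pi j/3)\cos(\alpha+2\pi j/3)=0$. The resulting expression should reduce to a constant independent of $\alpha$; its value at any convenient parameter (say the isosceles configuration $\alpha = \pi/2$) then matches the claimed radii $(a-b)/2$, $(a+b)/2$, $(a-b)^2/(2(a+b))$, and $(a+b)^2/(2(a+b))$, respectively. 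Together with the symmetry argument above, this establishes that each locus is a circle of the stated radius centered at $O$.

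The chief obstacle is the handling of the $\sqrt{3}$'s introduced by the $\pi/3$ shift together with the cosecants in the isogonic trilinears: the denominators $\sin(\theta_i \pm \pi/3)$ must be cleared symbolically before the CAS can collapse the expressions, and a naive expansion leaves unmanageable trigonometric polynomial degree. Rationalizing each barycentric weight by its $\mathbb{Q}(\sqrt{3})$-conjugate and grouping terms by their invariance under the $2\pi/3$ rotation of $\alpha$ keeps the intermediate expressions tractable. Once the rational form is reached, the constancy of $|X_k|^2$ and the stated values of the radii should follow directly from simplification.
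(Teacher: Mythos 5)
Your plan is essentially the paper's own approach: the authors give no explicit proof for this proposition, relying on the CAS-assisted method announced in Section~\ref{sec:review} (explicit vertex parametrization from Appendix~\ref{app:explicit-III}, symbolic expression for the center, simplification, verification of constancy of $|X_k-O|$), which is exactly what you spell out, with the correct trilinears $\csc(\theta_i\pm\pi/3)$ and $\sin(\theta_i\pm\pi/3)$ and the correct radii. Your added axial-symmetry remark is harmless but redundant, since constancy of $|X_k-O|^2$ already places the locus on a circle centered at $O$.
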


\begin{obs}
Over all $a/b$, the radius of $X_{16}$ is minimum when $a/b=3$.
\end{obs}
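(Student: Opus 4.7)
The plan is to reduce the statement to a routine single-variable optimization. The formula for $r_{16}$ given in the preceding proposition is homogeneous of degree one in $(a,b)$, so it admits the form $r_{16}=b\,g(t)$ where $t=a/b$. Normalizing $b=1$ -- the natural choice implicit in the phrase ``over all $a/b$'' -- converts the problem into minimizing the one-variable rational function $g(t)$ over the admissible interval $t>1$ (we may assume $a\geq b$ without loss of generality).

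The next step is to differentiate $g$ via the quotient rule and simplify. Because both numerator and denominator of $r_{16}$ are polynomials in $(a,b)$, the numerator of $g'(t)$ is itself a low-degree polynomial in $t$. The key verification is that this polynomial factors in such a way that $(t-3)$ is its only root in $(1,\infty)$, making $t=3$ the unique interior critical point of $g$. A standard sign analysis -- showing $g'(t)<0$ on $(1,3)$ and $g'(t)>0$ on $(3,\infty)$, or equivalently a second-derivative check at $t=3$ -- then certifies that $t=3$ is a global minimum on the admissible range, which is precisely the claimed observation.

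No serious obstacle is anticipated; once homogeneity has been exploited to reduce to one variable, the computation is elementary. The only point worth flagging is the role of the normalization: since $r_{16}$ scales linearly under uniform dilation of the outer ellipse, the critical aspect ratio would shift if a different invariant (say $ab$ or $a^{2}+b^{2}$) were held fixed instead of a single semi-axis. The statement as written corresponds to treating one of $a,b$ as the length unit, which matches the convention implicit in the surrounding propositions and makes the minimization over $t=a/b$ meaningful.
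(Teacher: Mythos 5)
The paper offers no proof of this observation at all, so your single-variable reduction is the natural (indeed the only sensible) route, and your closing remark about normalization is correct and important: the radius is homogeneous of degree one in $(a,b)$, the claim is only meaningful with a scale fixed, and the critical ratio does depend on that choice (fixing $ab=1$, for instance, moves the minimizer to $a/b=3+2\sqrt{2}$; fixing $a$ produces no interior critical point). The gap is that you never execute the computation and assert that ``no serious obstacle is anticipated,'' whereas there is one: if you plug in the radius exactly as printed in the preceding proposition, namely $r_{16}=(a+b)^2/z$ with $z=2(a+b)$, you get $r_{16}=(a+b)/2$, which for fixed $b$ is strictly increasing in $a$ and has no critical point anywhere; it also coincides with the printed radius of the $X_{14}$ locus, contradicting Figure~\ref{fig:circ-loci-III}. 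Your anticipated factorization with $(t-3)$ as the unique root in $(1,\infty)$ simply does not occur for that expression, so the plan as written cannot be completed against the stated formula.

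The observation is recovered once the radius is read as $r_{16}=(a+b)^2/\bigl(2(a-b)\bigr)$, which is consistent with the geometry ($X_{16}$ escapes to infinity as $a/b\to 1$, when the family degenerates to rotating equilaterals, so $r_{16}$ must blow up there). With $b=1$ and $t=a/b>1$ one gets $g(t)=(t+1)^2/\bigl(2(t-1)\bigr)$ and
\[
g'(t)=\frac{(t+1)(t-3)}{2(t-1)^2},
\]
negative on $(1,3)$ and positive on $(3,\infty)$, so $t=3$ is the unique global minimizer, with minimal radius $g(3)=4$ (i.e.\ $4b$). That is exactly the calculation you describe, but a complete write-up must either derive this corrected radius or explicitly flag that the formula in the proposition, taken literally, is not the quantity being minimized.
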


\begin{figure}
    \centering
    \includegraphics[width=\textwidth]{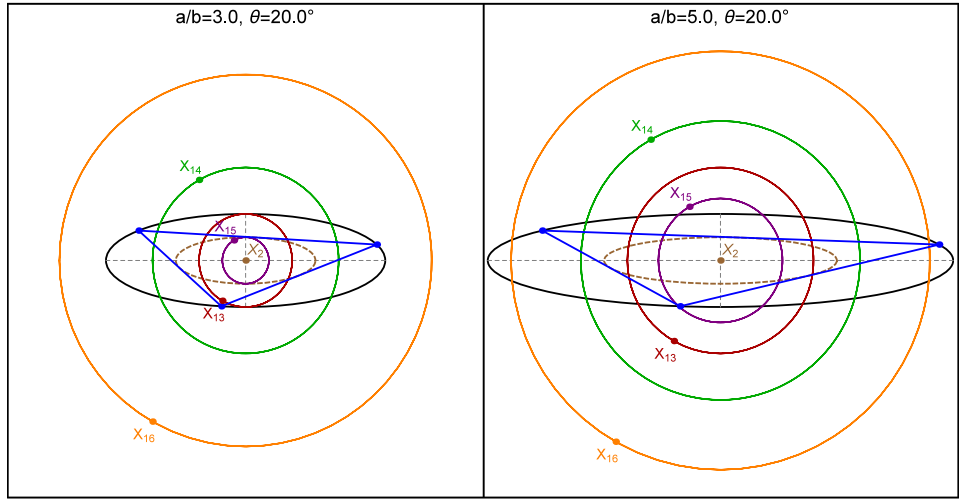}
    \caption{Circular loci of the first and second Fermat points $X_{13}$ and $X_{14}$ (red and green) as well as the first and second isodynamic points $X_{15}$ and $X_{16}$ (purple and orange) for two aspect ratios of the homothetic pair: $a/b=3$ (left) and $a/b=5$ (right). The radius of the $X_{16}$ locus is minimal at the first case. \href{https://youtu.be/ZwTfwaJJitE}{Video}}
    \label{fig:circ-loci-III}
\end{figure}

\subsection{Family III and the Brocard Porism}

The Brocard porism \cite{bradley2007-brocard} is a family of triangles inscribed in a circle and circumscribed about a special inellipse known as the ``Brocard inellipse'' \cite[Brocard Inellipse]{mw}. Its foci coincide with the stationary Brocard points of the family. Furthermore, this family conserves the Brocard angle $\omega$. 

Referring to Figure~\ref{fig:brocard-n3}, we showed that over the homothetic family, the aspect ratio of the Brocard inellipse is invariant \cite{reznik2020-similarityII}. This leads to the following result, reproduced from  \cite[Theorem 3]{reznik2020-similarityII}:

\begin{theorem}
The 3-periodic family in a homothetic pair and that of the Brocard porisms are images of one another under a variable similarity transform.
\end{theorem}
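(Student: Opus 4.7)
The plan is to construct, for each parameter $t$ along Family III, an explicit similarity $S_t$ (composition of translation, rotation, and uniform scaling) that carries the 3-periodic $T_t$ onto a triangle of the Brocard porism. Two similarity invariants steer the construction: (i) the Brocard angle $\omega$, constant along Family III by Proposition~\ref{prop:somacotan} and constant along the Brocard porism by \cite{johnson29}; and (ii) the eccentricity of the Brocard inellipse $\mathcal{B}_t$, constant along Family III by \cite{reznik2020-similarityII} and trivially constant along the porism, where $\mathcal{B}_t$ is stationary.

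First I would tune $a,b$ so that the Family III Brocard angle $\cot\omega=\frac{\sqrt{3}}{2}\cdot\frac{a^2+b^2}{ab}$ matches the Brocard angle of the target porism; the right-hand side sweeps out $[\sqrt{3},\infty)$ as $a/b$ varies, covering every admissible value $\omega\in(0,\pi/6]$ (with equality at the equilateral/concentric-circle limit). Fix the porism's inellipse $\mathcal{B}_*$. For each $t$, let $S_t$ be the essentially unique similarity sending $\mathcal{B}_t$ onto $\mathcal{B}_*$ (well-defined up to orientation since both ellipses share the same eccentricity), and set $T_t^*:=S_t(T_t)$. Then $T_t^*$ circumscribes $\mathcal{B}_*$, inherits Brocard angle $\omega$, and has its Brocard points at the foci of $\mathcal{B}_*$, which coincide with the porism's stationary Brocard points.

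By Poncelet's closure theorem, $T_t^*$ belongs to some 3-periodic Poncelet family caustic to $\mathcal{B}_*$; the final step is to verify that its circumconic is the porism's fixed circumcircle, independent of $t$. This is where I expect the main difficulty. I would handle it via Cayley's closure condition applied to the one-parameter pencil of conics enclosing $\mathcal{B}_*$ and admitting a 3-periodic Poncelet family: the additional constraint that the resulting triangles have prescribed Brocard angle $\omega$, equivalently the prescribed ratio $(\sum s_i^2)/(4A)$ (cf.\ Corollary~\ref{prop:somaL2}), singles out a unique circumconic, which must therefore be the porism's circumcircle. A more conceptual alternative would be to invoke the classical Brocard-configuration identities that express the circumradius $R$ and circumcenter $O$ in terms of the Brocard points and $\omega$ alone: once those data are fixed, $O$ and $R$ are determined, and the circumcircle is automatically stationary, completing the identification of $T_t^*$ as a member of the Brocard porism.
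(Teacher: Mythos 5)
The paper itself offers no proof of this theorem: it is reproduced from \cite[Theorem 3]{reznik2020-similarityII}, and the only justification given is the invariance of the aspect ratio of the Brocard inellipse over Family III. Your construction --- define $S_t$ as the similarity carrying the Brocard inellipse $\mathcal{B}_t$ of the Family III triangle onto the porism's stationary inellipse $\mathcal{B}_*$, after matching Brocard angles --- uses exactly that ingredient and is the natural way to turn it into a proof; the skeleton (similarity covariance of the Brocard inellipse, foci at the Brocard points, invariance of $\omega$ by Proposition~\ref{prop:somacotan}) is sound. One point you leave implicit: for $S_t$ to exist you need the eccentricity of $\mathcal{B}_t$ to \emph{equal} that of $\mathcal{B}_*$, not merely to be constant along each family separately; this follows because the eccentricity of the Brocard inellipse is a function of $\omega$ alone (all triangles of a single Brocard porism are generally non-similar yet share one Brocard inellipse and one $\omega$), but it should be stated.

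For the final step, only the second of your two alternatives works. The ``Cayley pencil'' route is not well formed: the conics through the three vertices of $T_t^*$ form a two-parameter family, the conics circumscribing $\mathcal{B}_*$ that admit a 3-periodic Poncelet family form a four-parameter family rather than a pencil, and prescribing $\omega$ does not in any obvious way isolate the circle among the circumconics of a fixed triangle. The Brocard-geometry route does close the gap: $|\Omega_1\Omega_2|=2R\sin\omega\sqrt{1-4\sin^2\omega}$ and $|X_3\Omega_1|=|X_3\Omega_2|=R\sqrt{1-4\sin^2\omega}$ \cite{johnson29}, so once the foci of $\mathcal{B}_*$ and $\omega$ are fixed, $R$ is determined and $X_3$ is confined to one of two points symmetric about the focal axis; continuity in $t$ forces $X_3$, hence the circumcircle, to be constant, so each $T_t^*$ belongs to the Brocard porism of that circle and $\mathcal{B}_*$. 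Applying $S_t^{-1}$ gives the reverse direction; a remark on choosing a continuous branch of $S_t$ (it is defined only up to the symmetry group of the ellipse) and on surjectivity onto the porism would complete the write-up.
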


As shown in \cite{garcia2021-ellipses-web}, the locus of the center $X_{39}$ of the Brocard inellipse is an ellipse (it is stationary in the Brocard porism).

\begin{figure}
    \centering
    \includegraphics[width=\textwidth,trim={0 0 0 2.33cm},clip]{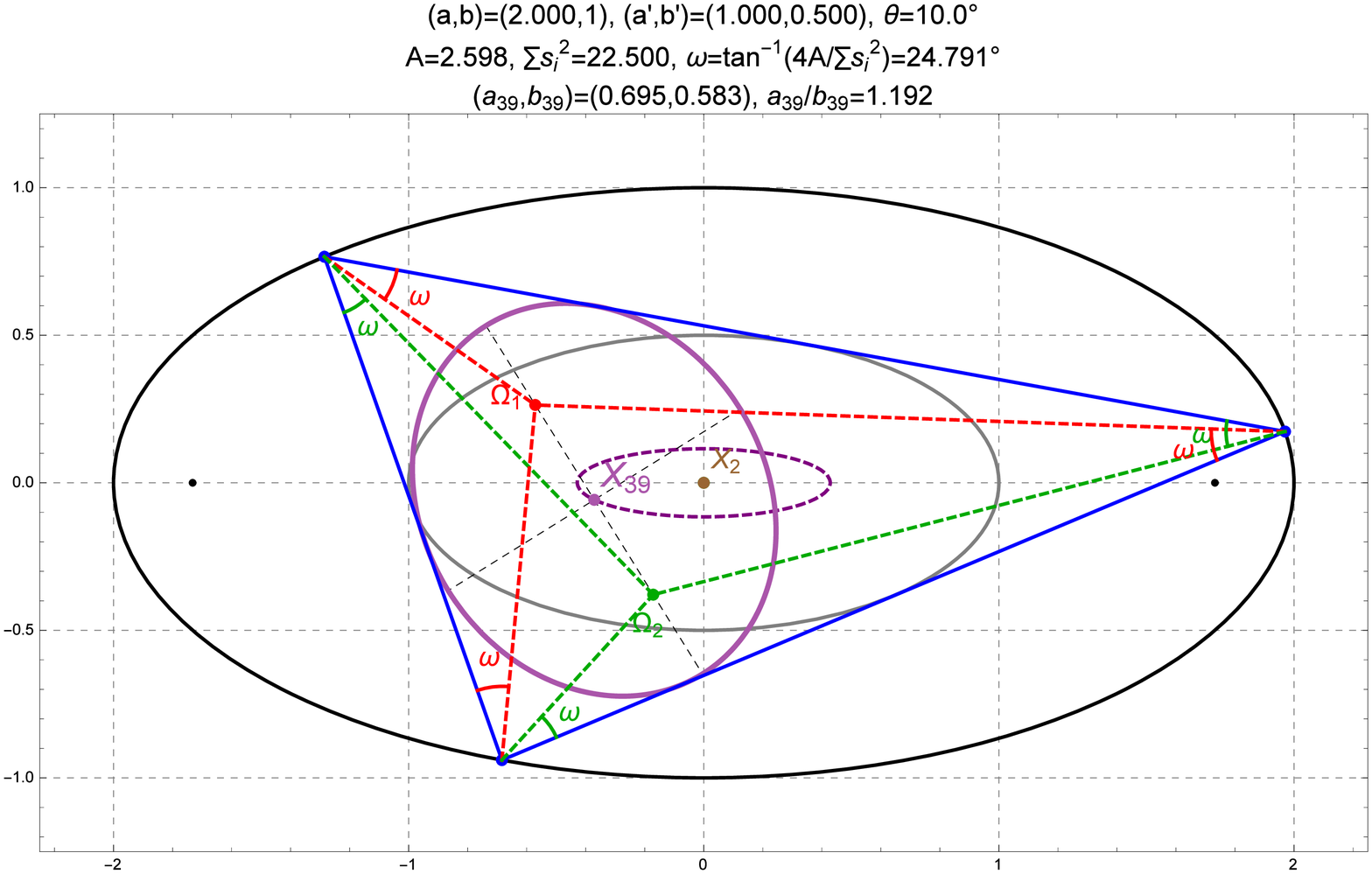}
    \caption{Family III triangles (blue) are the image of Brocard porism triangles under a variable similarity transform \cite{reznik2020-similarityII}. This stems from the fact that the family's Brocard inellipse (purple), centered on $X_{39}$ and with foci on the Brocard points $\Omega_1,\Omega_2$, has a fixed aspect ratio. Also shown is the elliptic locus of $X_{39}$. \href{https://youtu.be/DIm2qTxGWXE}{Video}}
    \label{fig:homot-broc-inell}
\end{figure}

\section{Summary}
\label{sec:summary}
Table~\ref{tab:xn-comparison} summarizes the types of loci (point, circle, ellipse, etc.) for several triangle centers for all families mentioned above. These are organized within three groups A, B, and C with closely-related loci types. Exceptions are also indicated though we still lack a theory for it.

\begin{table}
\small
\centering
\begin{tabular}{|c||c|c|c||c|c|c||c|c||}
\hline
& \multicolumn{3}{c||}{Group A} & \multicolumn{3}{c||}{Group B} & \multicolumn{2}{c||}{Group C} \\
\hline
 & Conf. & F.I & Por. & \makecell{Conf.\\Exc} & F.II &  \makecell{Por.\\Exc.} & F.III & Broc. \\
 \hline
$X_1$ & E & P & P & X&X & X & 4 & X \\
$X_2$ & E & E & C & E&C & P & P & C \\
$X_3$ & E & C & P & E&P & P & E & P \\
$X_4$ & E & E & C & E&C & P & E & C \\
$X_5$ & E & C & C & E&C & P & E & C \\
$X_6$ & 4 & 4 & \textcolor{red}{\textbf{E}} & P&E & C & E & P \\
$X_7$ & E & E & C & X & X & X & X & X \\
$X_8$ & E & E & C & X & X & X & X & X \\
$X_9$ & P & E & C & X & X & X & X & X \\
$X_{10}$ & E & E & C & X & X & X & X & X \\
$X_{11}$ & E$''$ & C$''$ & C$''$ & X & X & \textcolor{red}{\textbf{C$_5$}} & X & X \\
$X_{12}$ & E & C & C & X & X & X & X & X \\
$X_{13}$ & X & X & X & X & X & X & C & C \\
$X_{14}$ & X & X & X & X & X & X & C & C \\
$X_{15}$ & X & X & X & X & X & X & C & P \\
$X_{16}$ & X & X & X & X & X & X & C & P \\
$X_{99}$ & X & X & C$'$ & \textcolor{red}{\textbf{X}}&C$'$ & C$'$ & E$'$ & C$'$ \\
$X_{100}$ & E$'$ & E$'$ & C$'$ & \textcolor{red}{\textbf{X}}&C$'$ & C$'$ & \textcolor{red}{\textbf{X}} & C$'$ \\
$X_{110}$ & X & X & C$'$ & E$'$&C$'$ & C$'$ & \textbf{X} & C$'$ \\
 \hline
\end{tabular}
\caption{Types of loci for several triangle centers over several Poncelet triangle families, divided in 3 groups A,B,C with closely-related metric phenomena: (i) confocal, fam. I, poristics; (ii) confocal excentral, fam. II, poristic excentral triangles; (iii) fam. III and Brocard porism. Symbols P, C, E, and X indicate point, circle, ellipse, and non-elliptic (degree not yet derived) loci, respectively. A number refers to the degree of the non-elliptic implicit, e.g., '4' for quartic. A singly (resp. doubly) primed letter indicates a perfect match with the outer (resp. inner) conic in the pair. The symbol C$_5$ refers to the nine-point circle. The boldface entries indicate a discrepancy in the group (see text). Note: $X_n$ for the confocal and poristic excentral triangles refer to triangle centers of the family itself (not of their reference triangles).}
\label{tab:xn-comparison}
\end{table}

The first row reveals that out of the 8 families considered only in the confocal case is the locus of the incenter $X_1$ an ellipse. Additionaly experimentation has suggested an intriguing conjecture:

\begin{conjecture}
Given a pair of conics which admits a Poncelet 3-periodic family, only when such conics are confocal will the locus of either the incenter $X_1$ or the excenters be a non-degenerate conic.
\end{conjecture}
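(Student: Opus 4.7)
The plan is to fix a rigid-motion normal form for an arbitrary Poncelet-$3$ conic pair, parametrize $X_1$ algebraically along the family, and then read off when the resulting locus has degree at most $2$.

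\textbf{Setup and parametrization.} Applying a rigid motion (which preserves the incenter construction), place the outer conic as $\mathcal{E}:\,x^2/a^2+y^2/b^2=1$. The inner conic $\mathcal{E}''$ is then determined by five parameters---center $(h,k)$, tilt $\theta$, semi-axes $(a'',b'')$---and Cayley's closure condition for $N=3$ cuts these down to a $4$-dimensional moduli space $\mathcal{P}$ of admissible pairs. The confocal stratum sits inside $\mathcal{P}$ as the codimension-$3$ locus $\{h=k=\theta=0,\ a^2-b^2=a''^2-b''^2\}$. Parametrize a moving vertex as $V_1(t)=(a\cos t,b\sin t)$; the two tangents from $V_1(t)$ to $\mathcal{E}''$ reintersect $\mathcal{E}$ at $V_2(t)$ and $V_3(t)$, and Poncelet closure assembles these into a genuine triangle for every $t$.

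\textbf{Locus and degree constraint.} Compute $X_1(t)$ as the intersection of two internal angle bisectors of this triangle, which is rational in $t$ after clearing a common factor (this is preferable to the barycentric formula $X_1=(s_1V_1+s_2V_2+s_3V_3)/(s_1+s_2+s_3)$ because it avoids the square roots hidden in the sidelengths $s_i$). Eliminating $t$ produces an implicit equation $F(x,y;a,b,a'',b'',h,k,\theta)=0$ whose generic degree in $(x,y)$ is larger than $2$. Requiring $F$ to factor as a non-degenerate irreducible quadric times factors absorbed by the denominators collapses into a polynomial system in the pair parameters; the goal is to prove that its zero set inside $\mathcal{P}$, after discarding stationary-$X_1$ and reducible branches, coincides exactly with the confocal stratum. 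The excenter case runs in parallel: flipping signs of the bisector equations gives the three excentral branches, and the combined curve is a conic exactly when its defining polynomial has degree $2$.

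\textbf{Main obstacle.} The elimination is the heavy lift: the naive resultant lives in a high-degree polynomial ring and, even with a CAS, is unlikely to yield a humanly parseable ideal. A more promising route is to parametrize the family by a point $u$ on the elliptic curve $\mathcal{C}$ naturally attached to the pencil $\lambda\mathcal{E}+\mu\mathcal{E}''$, so that $V_i(u)=V_1(u+i\tau)$ for a fixed $3$-torsion shift $\tau$; in these coordinates $X_1(u)$ becomes a ratio of theta functions, and one tries to show that its image is quadratic only when a Joachimsthal-type first integral is present---which is equivalent to confocality. A complementary tactic is to combine Euler's relation $|X_1-X_3|^2=R^2-2Rr$ with the already-known elliptic loci of $X_3$ tabulated in Table~\ref{tab:xn-comparison}: the joint constraints on $(X_1,X_3,R,r)$ rigidify the incenter locus and, outside the confocal case, should obstruct it from being a conic.
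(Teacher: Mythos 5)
The statement you are working on is labelled a \emph{conjecture} in the paper: the authors offer no proof, only experimental evidence (the first row of Table~\ref{tab:xn-comparison}, where the $X_1$ locus is an ellipse only in the confocal column). So there is no paper proof to compare against, and the real question is whether your proposal closes the gap. It does not: what you have written is a research plan in which every step carrying actual mathematical weight is deferred. The elimination producing $F(x,y;\ldots)=0$ is never performed; the assertion that the degree-$\le 2$ condition ``collapses into a polynomial system whose zero set coincides exactly with the confocal stratum'' is precisely the conjecture restated in the language of elimination ideals, with no argument excluding extra components; and the theta-function route hinges on the unproved equivalence ``the image of $X_1(u)$ is quadratic only when a Joachimsthal-type first integral is present,'' which is again the conjecture in disguise---every Poncelet pair carries the elliptic curve of its pencil, so you would need to isolate which analytic feature of the confocal case alone forces quadraticity.

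The closing tactic is also weaker than it looks. Euler's relation $|X_1-X_3|^2=R(R-2r)$ holds for \emph{every} triangle, and over a generic Poncelet family $R$ and $r$ both vary with $t$; the relation therefore links four $t$-dependent quantities and by itself imposes no rigidity on the $X_1$ locus. Family~I is a cautionary example: there $R$, $r$, and $|X_1-X_3|$ are all constant, yet the $X_1$ locus is a stationary point (a degenerate conic the conjecture deliberately excludes), as happens again for the poristic and Brocard families. Two smaller issues: your normal form silently assumes the outer conic is an ellipse, whereas the conjecture concerns arbitrary conic pairs admitting a real 3-periodic family; and the confocal stratum inside your $\mathcal{P}$ (outer ellipse fixed, Cayley imposed) is a single point, not a codimension-3 subvariety. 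None of this is fatal to the program, but as written the proposal locates the difficulty without resolving it, and the conjecture remains open.
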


The plethora of circles in the poristic family had already been shown in \cite{odehnal2011-poristic}. An above-than-expected frequency of ellipses for the confocal pair was signalled in \cite{garcia2020-ellipses}. As mentioned above, irrational centers $X_k$, $k\in[13,16]$ sweep out circles for the homothetic pair. $X_{15}$ and $X_{16}$ are known to be stationary over the Brocard family \cite{bradley2007-brocard}, however the locus of $X_{13}$ and $X_{14}$ are circles! Also noticeable is the fact that (i) though in the confocal pair the locus of $X_1$ and $X_6$ is an ellipse and a quartic, respectively, in both family II and family III said locus types are swapped. The reasons remain mysterious.

It is well-known that there is a projective transformation that takes any Poncelet family to the confocal pair,  \cite{dragovic11}. In this case only  projective properties are preserved. 
If one restricts the set of possible transformations to either affine ones or similarities (which include rigid transformations), one can construct the two-clique graph of interrelations shown in Figure~\ref{fig:transf-overview}.

As mentioned above, the confocal family is the affine image of either family I or family II. In the first (resp. second) case the caustic (resp. outer ellipse) is sent to a circle. Though the affine group is non-conformal, we showed above that both families conserve their sum of cosines (Theorem~\ref{thm:famI-confocal-cos}). One way to see this is that there is an alternate, conformal path which takes family I triangles to the confocal ones, namely a rigid rotation (yielding poristic triangles), followed by a variable similarity (yielding the confocal family).

A similar argument is valid for family II triangles: there is an affine path (non-conformal) to the confocal family though both conserve the product of cosines (Theorem~\ref{thm:famII-confocal-cos-prod}). Notice an alternate conformal composition of rotation (yielding poristic excentral triangles) and a variable similarity (yielding confocal excentral triangles). All in this path conserve the product of cosines.

Finally, family III and Brocard porism triangles form an isolated clique. As mentioned in \cite{reznik2020-similarityII}, these are variable similarity images of one another but cannot be mappable to the other families via similarities nor affinely.

Table~\ref{tab:summary} summarizes some properties of 3-periodics mentioned herein. The last column reveals that many of the invariants continue to hold for N>3. Animations illustrating some focus-inversive phenomena are listed in Table~\ref{tab:playlist}.

\begin{figure}
 \centering
 \includegraphics[width=\textwidth,trim={50 0 130 0},clip]{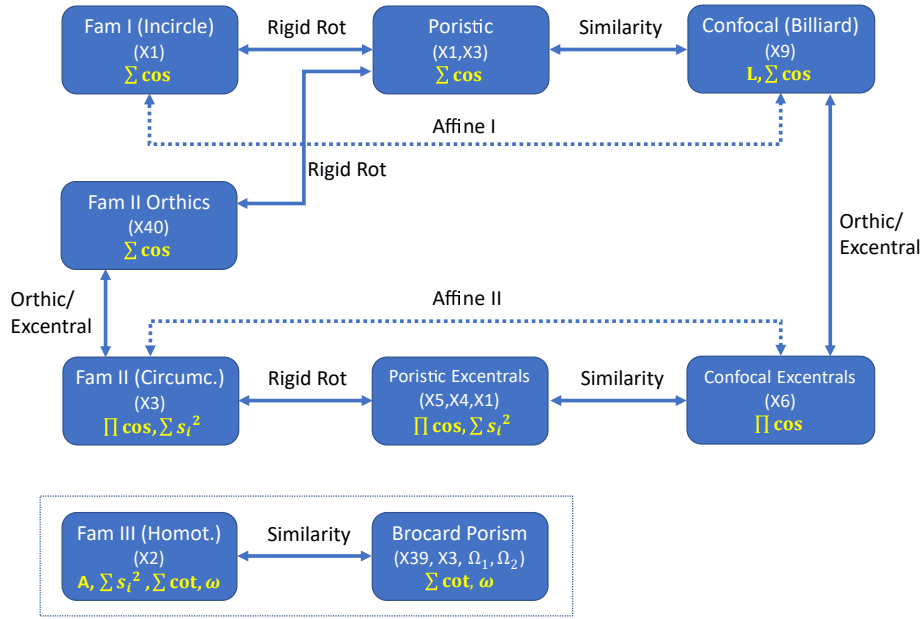}
 \caption{Diagram of transformations that take one 3-periodic family into another. The families are specified in each box while the transformations label the arrows. The second (resp. third) line in each box lists the stationary point(s) (resp. main invariants) in the family.}
 \label{fig:transf-overview}
\end{figure}

\begin{table}
\begin{tabular}{|l|l|l|l|l|l|}
\hline
fam. & pair & \makecell[lt]{N=3\\outer conic} & \makecell[lt]
{N=3\\inner conic} & \makecell[lt]{N=3\\invariants} & \makecell[lt]{N>3} \\
\hline
 & billiard & ellipse $(a,b)$ & confocal caustic & $L,J,r/R,\color{red}\sum\cos$ & $L,J,\color{red}\sum\cos$ \\
I & inner circle & ellipse $(a,b)$ & circle $r=\frac{{a}{b}}{a+b}$ & $R,r/R,\color{red}\sum\cos$ & $\color{red}\sum\cos$ \\
II & outer circle & $R=(a+b)$ & ellipse $(a,b)$ & $\sum{s_i^2},\color{red}\prod\cos$ & $\sum{s_i^2},\color{red}\prod\cos$ \\
III & homothetic & ellipse $(a,b)$ & ellipse $(a/2,b/2)$ & $A,\sum{s_i^2},\omega,\color{red}\sum\cot$ & $A,\sum{s_i^2},\color{red}\sum\cot$ \\
%4 & $X_4$ stationary & ellipse $(a,b)$ & ellipse $(a_0,b_0)$ & $X_4=0$ & pseudo-$X_4$ = 0 \\
\hline
\end{tabular}
\caption{Summary of properties across different concentric Poncelet families. The last column shows some invariants which continue to hold for N>3.}
\label{tab:summary}
\end{table}

\begin{table}
\small
\begin{tabular}{|c|c|c|l|l|}
\hline
id & family & N & Title & \textbf{youtu.be/...}\\
\hline
01 & all & 3 & {Concentric Poncelet families} &
\href{https://youtu.be/8hkeksAsx0E}{\texttt{8hkeksAsx0E}}\\
02 & por. & 3 & {Chapple's poristic family \& excentral triangles} &
\href{https://youtu.be/DS4ryndDK6Qo}{\texttt{DS4ryndDK6Qo}}\\
03 & por. & 3 & {Poristics are image of billiard 3-periodics} &
\href{https://youtu.be/NvjrX6XKSFw}{\texttt{NvjrX6XKSFw}}\\
\hline
04 & I & 3 & \makecell[lt]{Side-by-side w/ the poristic family} &
\href{https://youtu.be/ML_AZoX736w}{\texttt{ML\_AZoX736w}}\\
05 & I & 3,5 & {Circular loci of X3 \& Steiner's curvature centroid} &
\href{https://youtu.be/601OfxuSDGc}{\texttt{601OfxuSDGc}}\\
06 & I & 3,5 & {Invariant ratio of sidelength product to sum} &
\href{https://youtu.be/7Jg2nRkkUhQ}{\texttt{7Jg2nRkkUhQ}}\\
\hline
07 & II & 3 & {Family is image of poristic excentrals} &
\href{https://youtu.be/wUu2iMesv3U}{\texttt{wUu2iMesv3U}} \\
08 & II & 3 & \makecell[lt]{Side-by-side w/the poristic family} &
\href{https://youtu.be/xM1SAZO9bDc}{\texttt{xM1SAZO9bDc}}\\
09 & II & 3,5 & {Circular locus of generalized orthocenter} &
\href{https://youtu.be/3f6YBohQCFg}{\texttt{3f6YBohQCFg}} \\
\hline
10 & III & 3 & {Stationary $X_2$ and invariant Brocard angle} &
\href{https://youtu.be/2fvGd8wioZY}{\texttt{2fvGd8wioZY}}\\
11 & III & 3 & {Loci of $X_k$, $k=$13,14,15,16 are all circles!} &
\href{https://youtu.be/ZwTfwaJJitE}{\texttt{ZwTfwaJJitE}}\\
12 & III & 3 & {Family is image of Brocard porism} &
\href{https://youtu.be/h3GZz7pcJp0}{\texttt{h3GZz7pcJp0}} \\
\hline
13 & I,II & 5,6 & {Locus of generalized circum- and orthocenter} &
\href{https://youtu.be/ZfQEDujbirQ}{\texttt{ZfQEDujbirQ}} \\
14 & I,II & 5 & {Locus of generalized circumcenter} &
\href{https://youtu.be/RP18B827l5I}{\texttt{RP18B827l5I}} \\
15 & I,II & 5 & {Generalized circumcenter (Steiner's curv. centroid)} &
\href{https://youtu.be/RP18B827l5I}{\texttt{RP18B827l5I}}\\
\hline
16 & dual & 3 & {The dual pair: stationary orthocenter} &
\href{https://youtu.be/fpd_Zot5cKk}{\texttt{fpd\_Zot5cKk}}\\
17 & dual & 3--8 & {Generalized stationary orthocenter} & \href{https://youtu.be/ttKjzWeG5B8}{\texttt{ttKjzWeG5B8}}\\
18 & dual & 5,7 & {Generalized stationary orthocenter} & \href{https://youtu.be/gNHiZvBhKF8}{\texttt{gNHiZvBhKF8}}\\
\hline
\end{tabular}
\caption{Videos illustrating some phenomena mentioned herein. The last column is clickable and provides the YouTube code.}
\label{tab:playlist}
\end{table}

\section*{Acknowledgements}
\label{sec:ack}
\noindent We would like to thank Mark Helman, Dominique Laurain, Arseniy Akopyan, Peter Moses, and Boris Odehnal for valuable insights. We are also indebted to the meticulous review by the referee. The first author is fellow of CNPq and coordinator of Project PRONEX/ CNPq/ FAPEG 2017 10 26 7000 508.

\appendix

\section{Explicit 3-Periodic Vertices}
\label{app:explicit}
\subsection{Pair 0: Confocal}
\label{app:explicit-confocal}

Let $(a,b)$ be the semi-axes of the external ellipse. Let $P_i=(x_i,y_i)/q_i$, $i=1,2,3$, denote the 3-periodic vertices, given by \cite{garcia2019-incenter}:

\begin{align*}
q_1&=1,\\
%\label{eqn:p2}
x_{2}&=-{b}^{4} \left(  \left(   a^2+{b}^{2}\right)k_1 -{a}^{2}  \right) x_1^{3}-2\,{a}^{4}{b}^{2} k_2  x_1^{2}{y_1}\\
&+{a}^{4} \left(  ({a
}^{2}-3\, {b}^{2})k_1+{b}^{2}
 \right) {x_1}\,y_1^{2}-2{a}^{6} k_2 y_1^{3},\\
y_{2}&= 2{b}^{6} k_2 x_1^{3}+{b}^{4}\left(  ({b
 }^{2}-3\, {a}^{2}) k_1  +{a}^{2}
  \right) x_1^{2}{y_1}\\
&+  2\,{a}^{2} {b}^{4}k_2 {x_1} y_1^{2} -{
a}^{4}  \left(  \left(   a^2+{b}^{2}\right)k_1  -{b}^{2}  \right)  y_1^{3},\\
q_2&={b}^{4} \left( a^2-c^2k_1   \right)
x_1^{2}+{a}^{4} \left(  {b}^{2}+c^2k_1  
 \right) y_1^{2} - 2\, {a}^{2}{b}^{2}{c^2}k_2 {x_1}\,{
y_1},\\
x_{3}&= {b}^{4} \left( {a}^{2}- \left( {b}^{2}+{a}^{2} \right) \right)
 k_1  x_1^{3} +2\,{a}^{4}{b}^{2}k_2  x_1^{2}{ y_1}\\
 &+{a}^{4} \left( 
  k_1 \left( {a}^{2}-3\,{b}^{2}
 \right) +{b}^{2} \right) { x_1}\, y_1^{2} +2\, {a}^{6} k_2 y_1^{3},\\
y_{3}&= -2\, {b}^{6} k_2 x_1^{3}+{b}^{4} \left( {a}^{2}+ \left( {b}^{2}-3\,{a}^{2} \right)    k_1 \right) {{ x_1}}^{2}{ y_1}
\\
& -2\,{a}^{2}  {b}^{4} k_2  x_1 y_1^{2}+
 {a}^{4} \left( {b}^{2}- \left( {b}^{2}+{a}^{2} \right)   k_1 \right)\,  y_1^{3},\\
q_3&= {b}^{4} \left( {a}^{2}-{c^2}k_1   \right) x_1^{2}+{a}^{4} \left( {b}^{2}+c^2k_1  \right)  y_1^{2}+2\,{a}^{2}{b}^{
2} c^2 k_2\, { x_1}\,{ y_1},
\end{align*}
\noindent where:

\begin{align*}
k_1&=\frac{d_1^2\delta_1^2}{\,d_2}=\cos^2{\alpha},\;\;k_2=\frac{ \delta_1d_1^2}{d_2 }\sqrt{ d_2 -d_1^4\delta_1^2}=\sin{\alpha}\cos\alpha,\\
c^2&=a^2-b^2,\;\; d_1=(a\,b/c)^2,\;\;d_2={b}^{4}x_1^2 +{a}^{4}y_1^2,\\
\delta&=\sqrt{a^4+b^4-a^2 b^2},\;\;\delta_1=\sqrt{2 \delta-a^2-b^2}\cdot
\end{align*}

\noindent where $\alpha$, though not used here, is the angle of segment $P_1 P_2$ (and $P_1 P_3$) with respect to the normal at $P_1$.

\subsection{Pair I: Incircle}
\label{app:explicit-I}

3-periodics are given by $P_1(t)=(x_1,y_1)=(a\cos{t},b\sin{t})$. Then, the $P_i=(x_i,y_i),i=2,3$ are:

\begin{align*}
	x_{2}=& 2\,{a}^{2}{b}^{2} \left( -{a}^{2}bx_1+ k \; y_1 \right)/q_2,\;\;y_{2}=-2\,a{b}^{3}  \left( {a}^{2}by_1+k\; x_1 \right)/q_2,\\
	x_{3}=&-2\,{a}^{2}{b}^{2} \left( {a}^{2}bx_1+k\;y_1 \right)/q_3,\;\;y_{3}=  2\,{b}^{3}a \left( -{a}^{2}by_1+k\; x_1 \right)/q_3,\\
	k=&\sqrt {{a}^{3} \left( a+2\,b \right)x_1^{2} +{a}^{2}b \left( 2\,a+b \right) y_1^{2}},\\
	q_2=&  2b^2(a+b)((a^2-b^2)x_1^2+a^2b^2),\\
	q_3=&\left( {b}^{2}{a}^{4}-y_1^{2}{a}^{4}+2\,{a}^{2}{b}^{4}+{a}^{2}
	{b}^{2}x_1^{2}-2\,x_1^{2}{b}^{4} \right)  \left(a+b\right) \cdot
\end{align*}

\subsection{Pair II: Inellipse}
\label{app:explicit-II}

3-periodics are given by $P_1(t)=(x_1,y_1)=R(\cos{t},\sin{t})$ with $R=a+b$. Then the	$P_i=(x_i,y_i),i=2,3$ are given by:

\begin{align*}
x_{2}=& \left(-{b}^{2}x_1+y_1\,s_x \right){k_x},\;\;y_{2}= -\left(y_1\,{a}^{2}+x_1\,s_y \right){k_y},\\
x_{3}=& -\left({b}^{2}x_1+y_1\,s_x \right){k_x},\;\;y_{3}= \left(-y_1\,{a}^{2}+x_1\,s_y \right){k_y},\\
s_x=&\sqrt {{a}^{3}(a+2 b)-(a^{2}-b^2)x_1^{2} },\;\;s_y=\sqrt {(a^{2}-b^2)y_1^{2}+ {b}^{3}(2a+b) },\\
k_x=&\frac{a}{\left(-a+b\right) x_1^{2}+{a}^{2} \left(a+b \right) },\;\;k_y=\frac{b}{\left(a-b\right) y_1^{2}+{b}^{2} \left( a+b \right) }\cdot
\end{align*}

\subsection{Pair III: Homothetic}
\label{app:explicit-III}

3-periodics are given by $P_1(t)=(x_1,y_1)=(a\cos{t},b\sin{t})$. Then $P_i=(x_i,y_i),i=2,3$ are:

\begin{align*}
  (x_2,y_2)=&  \left( \frac {\sqrt{3}\;ay_1-bx_1}{2b}, \frac {-\sqrt{3}\;bx_1-ay_1}{2a}\right),\\
(x_3,y_3)=&  \left(  \frac {-\sqrt{3}\;a{y_1}-b{x_1}}{2b}, \frac {\sqrt{3}\;b{x_1}-a{y_1}}{2a}\right)\cdot
\end{align*}

\section{Elliptic Loci}
\label{app:ell-axes}
Below we list triangle centers amongst $X_k$, $k=1,\ldots,200$ for each of the Poncelet pairs mentioned in this article, whose loci are either ellipses or circles.

\begin{itemize}
    \item 0. Confocal pair (stationary $X_9$)
    \begin{itemize}
	\item Ellipses: 1, 2, 3, 4, 5, 7, 8, 10, 11, 12, 20, 21, 35, 36, 40, 46, 55, 56, 57, 63, 65, 72, 78, 79, 80, 84, 88, 90, 100, 104, 119, 140, 142, 144, 145, 149, 153, 162, 165, 190, 191, 200. Note: the first 29 in the list were proved in \cite{garcia2020-ellipses}.
	\item Circles: n/a
	\end{itemize}
    \item I. Incircle: (stationary $X_1$)
    \begin{itemize}
    \item Ellipses: 2, 4, 7, 8, 9, 10, 20, 21, 63, 72, 78, 79, 84, 90, 100, 104, 140, 142, 144, 145, 149, 153, 191, 200.
    \item Circles: 3, 5, 11, 12, 35, 36, 40, 46, 55, 56, 57, 65, 80, 119, 165.
    \end{itemize}
    \item II. Inellipse (w/ circumcircle): (stationary $X_3$)
	\begin{itemize}
	\item Ellipses: 6, 49, 51, 52, 54, 64, 66, 67, 68, 69, 70, 113, 125, 141, 143, 146, 154, 155,  159, 161, 182, 184, 185, 193, 195.
	\item Circles: 2, 4, 5, 20, 22, 23, 24, 25, 26, 74, 98, 99, 100, 101, 102, 103, 104, 105, 106, 107, 108, 109, 110, 111, 112, 140, 156, 186.
	\end{itemize}
    \item III. Homothetic: (stationary $X_2$)
	\begin{itemize}
	\item Ellipses: 3, 4, 5, 6, 17, 20, 32, 39, 62, 69, 76, 83, 98, 99, 114, 115, 140, 141, 147, 148, 182, 187, 190, 193, 194.
	\item Circles: 13, 14, 15, 16.
	\end{itemize}
	%\item IV. Dual: (stationary: $X_4$)
	%\begin{itemize}
    %\item Ellipses: 2, 3, 5, 20, 64, 107, 122, 133, 140, 154.
    %\item Circles n/a
    %\end{itemize}
\end{itemize}

Semi-axes lengths for the elliptic loci of many triangle centers are available in \cite{garcia2021-ellipses-web}.

\section{Loci of Incenter and Symmedian in the Elliptic Billiard}
\label{app:loci-x1x6}
Over 3-periodics in the elliptic billiard, the locus of the incenter $X_1$ is an origin centered ellipse with axes $a_1$, $b_1$ given by \cite{garcia2019-incenter}:

\begin{equation*}
 a_1=\frac{\delta-{b}^{2}}{a},\;\;\;
 b_1=\frac{{a}^{2}-\delta}{b}\cdot
\end{equation*}

Over the same family, the locus of $X_6$ is a convex quartic given by \cite[Theorem 2]{garcia2020-ellipses}:

\begin{equation*}
  \text{locus $X_6$}:\;c_1 x^4+c_2 y^4+c_3 x^2 y^2+ c_4 x^2 + c_5 y^2 = 0,
\end{equation*}
\noindent where:
\begin{align*}
c_1=&b^4(5\delta^2-4(a^2-b^2)\delta -a^2 b^2),&c_2=&a^4(5\delta^2+4(a^2-b^2)\delta-a^2b^2), \\
c_3=&2a^2 b^2(a^2 b^2+3\delta^2),&c_4=&a^2 b^4(3 b^4+2(2 a^2-b^2)\delta-5\delta^2),\\
c_5=&a^4 b^2(3 a^4+2(2 b^2-a^2)\delta-5\delta^2),&\delta=&\sqrt{a^4-a^2 b^2+b^4}\cdot
\end{align*}

Note: this curve has an isolated point at the origin whose geometric meaning is not yet understood.

%\section{Isodynamic Pedals and Isogonic Antipedals}
%\label{app:equilaterals}
%\input{130_app_equilaterals}

\section{Table of Symbols}
\begin{table}[H]
\small
\begin{tabular}{|c|l|l|}
\hline
symbol & meaning & note \\
\hline
$O$ & center of concentric pair & \\
$a,b$ & ellipse semi-axes & \\
$s_i,s$ & sidelength and semiperimeter & $i=1,\ldots{N}$\\
$\theta_i$ & internal angle & \\
$L$ & perimeter & $\sum_i{s_i}$ \\
$L_2$ & sum of squared sidelengths & $\sum_i{s_i^2}$ \\
$K$ & Steiner's Curvature Centroid & \makecell[lt]{$\sum_i{w_i{P_i}}/\sum_i{w_i}$\\$w_i=\sin(2\theta_i)$}\\
\hline
$r,R$ & inradius, circumradius & \\
$d'$ & $|X_4-X_5|$ & \\
$r_h,R_h$ & inradius, circumradius of ortic & \\
$\omega$ & Brocard angle & $\tan(\omega)=4A/L_2$ \\
\hline
$X_1$ & incenter & \\
$X_2$ & barycenter & \\
$X_3$ & circumcenter & \\
$X_4$ & orthocenter & \\
$X_5$ & center of 9-point circle & \\
\hline
\end{tabular}
\caption{Symbols used.}
\label{tab:symbols}
\end{table}

\label{app:symbols}

\bibliographystyle{maa}
\bibliography{references,authors_rgk_v3} 

\begin{thebibliography}{10}
\expandafter\ifx\csname urlstyle\endcsname\relax
 \providecommand{\url}[1]{doi:\discretionary{}{}{}#1}\else
 \providecommand{\url}{doi:\discretionary{}{}{}\begingroup
  \urlstyle{rm}\Url}\fi

\bibitem{akopyan2020-invariants}
Akopyan, A., Schwartz, R., Tabachnikov, S. (2020).
\newblock Billiards in ellipses revisited.
\newblock \emph{Eur. J. Math.}
\newblock {doi}:10.1007/s40879-020-00426-9.

\bibitem{bialy2020-invariants}
Bialy, M., Tabachnikov, S. (2020).
\newblock {Dan Reznik's} identities and more.
\newblock \emph{Eur. J. Math.}
\newblock {doi}:10.1007/s40879-020-00428-7.

\bibitem{bradley2007-brocard}
Bradley, C., Smith, G. (2007).
\newblock On a construction of {H}agge.
\newblock \emph{Forum Geometricorum}, 7: 231--–247.
\newblock \url{forumgeom.fau.edu/FG2007volume7/FG200730.pdf}.

\bibitem{caliz2020-area-product}
Chavez-Caliz, A. (2020).
\newblock More about areas and centers of {Poncelet} polygons.
\newblock \emph{Arnold Math J.}
\newblock {doi}:10.1007/s40598-020-00154-8.

\bibitem{coxeter67}
Coxeter, H. S.~M., Greitzer, S.~L. (1967).
\newblock \emph{Geometry Revisited}, vol.~19 of \emph{New Mathematical
  Library}.
\newblock New York: Random House, Inc.

\bibitem{dragovic11}
Dragovi\'{c}, V., Radnovi\'{c}, M. (2011).
\newblock \emph{Poncelet Porisms and Beyond: Integrable Billiards,
  Hyperelliptic Jacobians and Pencils of Quadrics}.
\newblock Frontiers in Mathematics. Basel: Springer.

\bibitem{fierobe2021-circumcenter}
Fierobe, C. (2021).
\newblock On the circumcenters of triangular orbits in elliptic billiard.
\newblock \emph{J. Dyn. Control Syst.}
\newblock {doi}:10.1007/s10883-021-09537-2.

\bibitem{gallatly1914-geometry}
Gallatly, W. (1914).
\newblock \emph{The modern geometry of the triangle}.
\newblock London: Francis Hodgson.

\bibitem{garcia2019-incenter}
Garcia, R. (2019).
\newblock Elliptic billiards and ellipses associated to the 3-periodic orbits.
\newblock \emph{American Mathematical Monthly}, 126(06): 491--504.

\bibitem{garcia2020-poristic}
Garcia, R., Reznik, D. (2021).
\newblock Related by similarity {I}: Poristic triangles and 3-periodics in the
  elliptic billiard.
\newblock \emph{Intl. J. of Geom.}, 10(3): 52--70.

\bibitem{garcia2020-ellipses}
Garcia, R., Reznik, D., Koiller, J. (2020).
\newblock Loci of 3-periodics in an elliptic billiard: why so many ellipses?
\newblock arXiv:2001.08041.

\bibitem{garcia2020-new-properties}
Garcia, R., Reznik, D., Koiller, J. (2020).
\newblock New properties of triangular orbits in elliptic billiards.
\newblock \emph{Amer. Math. Monthly}, to appear.

\bibitem{garcia2021-ellipses-web}
Garcia, R., Reznik, D., Koiller, J. (2021).
\newblock Semi-axes of elliptic loci.
\newblock {GitHub}.
\newblock \url{http://dan-reznik.github.io/why-so-many-ellipses/}.

\bibitem{georgiev2012-poncelet}
Georgiev, V., Nedyalkova, V. (2012).
\newblock Poncelet’s porism and periodic triangles in ellipse.
\newblock \emph{Dynamat}.
\newblock \url{www.dynamat.oriw.eu/upload_pdf/20121022_153833__0.pdf}.

\bibitem{johnson29}
Johnson, R.~A. (1929).
\newblock \emph{Modern Geometry: An Elementary Treatise on the Geometry of the
  Triangle and the Circle}.
\newblock Boston, MA: Houghton Mifflin.

\bibitem{kaloshin2018}
Kaloshin, V., Sorrentino, A. (2018).
\newblock On the integrability of {B}irkhoff billiards.
\newblock \emph{Phil. Trans. R. Soc.}, A(376).

\bibitem{etc}
Kimberling, C. (2020).
\newblock Encyclopedia of triangle centers.
\newblock \emph{ETC}.
\newblock \url{faculty.evansville.edu/ck6/encyclopedia/ETC.html}.

\bibitem{miller1925}
Miller, N. (1925).
\newblock {A} {G}eneralization of a {P}roperty of {C}onfocal {C}onics.
\newblock \emph{Amer. Math. Monthly}, 32(4): 178--180.

\bibitem{odehnal2011-poristic}
Odehnal, B. (2011).
\newblock Poristic loci of triangle centers.
\newblock \emph{J. Geom. Graph.}, 15(1): 45--67.

\bibitem{reznik2020-similarityII}
Reznik, D., Garcia, R. (2021).
\newblock Related by similarity {II}: Poncelet 3-periodics in the homothetic
  pair and the brocard porism.
\newblock \emph{Intl. J. of Geom.}, 10(4): 18--31.

\bibitem{reznik2020-intelligencer}
Reznik, D., Garcia, R., Koiller, J. (2020).
\newblock Can the elliptic billiard still surprise us?
\newblock \emph{Math Intelligencer}, 42: 6--17.
\newblock \url{rdcu.be/b2cg1}.

\bibitem{reznik2020-invariants}
Reznik, D., Garcia, R., Koiller, J. (2021).
\newblock Fifty new invariants of n-periodics in the elliptic billiard.
\newblock \emph{Arnold Math. J.}
\newblock {doi}:10.1007/s40598-021-00174-y.

\bibitem{olga14}
Romaskevich, O. (2014).
\newblock On the incenters of triangular orbits on elliptic billiards.
\newblock \emph{Enseign. Math.}, 60(3-4): 247--255.

\bibitem{olga19_mitten}
Romaskevich, O. (2019).
\newblock Proof the mittenpunkt is stationary.
\newblock Private Communication.

\bibitem{sergei2016-com}
Schwartz, R., Tabachnikov, S. (2016).
\newblock Centers of mass of {P}oncelet polygons, 200 years after.
\newblock \emph{Math. Intelligencer}, 38(2): 29--34.

\bibitem{sergei91}
Tabachnikov, S. (2005).
\newblock \emph{Geometry and Billiards}, vol.~30 of \emph{Student Mathematical
  Library}.
\newblock Providence, RI: American Mathematical Society.
\newblock Mathematics Advanced Study Semesters, University Park, PA.

\bibitem{tabachnikov2015-circum}
Tabachnikov, S., Tsukerman, E. (2015).
\newblock Remarks on the circumcenter of mass.
\newblock \emph{Arnold Mathematical Journal}, 1: 101--112.

\bibitem{mw}
Weisstein, E. (2019).
\newblock Mathworld.
\newblock \emph{MathWorld--A Wolfram Web Resource}.
\newblock \url{mathworld.wolfram.com}.

\end{thebibliography}

\end{document}